\newcommand{\X}{\boldsymbol{X}}
\newcommand{\Y}{\boldsymbol{Y}}
\newcommand{\bH}{\boldsymbol{H}}
\newcommand{\e}{\boldsymbol{o}}
\newcommand{\bea}{\begin{eqnarray}}
\newcommand{\eea}{\end{eqnarray}}
\newcommand{\bean}{\begin{eqnarray*}}
\newcommand{\eean}{\end{eqnarray*}}
\newtheorem{cor-of-0.11}{Corollary}
\newtheorem{Thm}{Theorem}[section]
\newtheorem{cor}[Thm]{Corollary}
\newtheorem{prop}[Thm]{Proposition}
\newtheorem{Lem}[Thm]{Lemma}
\newtheorem{NLem}[Thm]{Notions and Lemma}
\newtheorem*{cor-of-0.13}{Corollary}
\theoremstyle{definition}
\newtheorem{defn}[Thm]{Definition}
\numberwithin{equation}{section}
\journal{xxx}
\begin{document}
\begin{frontmatter}

\title{An extension of Furstenberg's structure theorem for Noetherian modules and multiple recurrence theorems II}

\author{Xiongping Dai}
\ead{xpdai@nju.edu.cn}
\address{Department of Mathematics, Nanjing University, Nanjing 210093, People's Republic of China}

\begin{abstract}
Using a recent Furstenberg structure theorem, we obtain a quantitative multiple recurrence theorem relative to any locally compact second countable Noetherian module over a syndetic ring.
\end{abstract}

\begin{keyword}
Furstenberg theory $\cdot$ Noetherian module.

\medskip
\MSC[2010] Primary 37A15\sep 37A45 Secondary 37B20\sep 37P99\sep 22F05
\end{keyword}
\end{frontmatter}


\section*{Introduction}\label{sec0}
This paper will be devoted to studying the Furstenberg multiple recurrence of dynamical systems induced by any locally compact second countable Noetherian module over a syndetic ring acting on a standard Borel probability space, as a subsequent work of \cite{Dai-pre}.

First of all, we recall that by an ``lcscN'' $R$-module $G$ over a ``syndetic'' ring $(R,+,\cdot)$, we mean that $G$ and $(R,+)$ both are locally compact second countable Hausdorff commutative groups such that:
\begin{itemize}
\item The operation $(t,g)\mapsto t\cdot g$ is continuous from $R\times G$ to $G$.
\item $G$ is a Noetherian $R$-module: for every sequence $G_0\subseteq G_1\subseteq G_2\subseteq\dotsm$ of $R$-submodules of $G$, we have $G_n=G_{n+1}$ as $n$ sufficiently large.
\item $R$ is syndetic: $\forall t\not=0$, $Rt$ is syndetic in the sense that one can find a compact subset $K$ of $R$ with $K+Rt=R$.
\end{itemize}
See \cite{Dai-pre}. Clearly, $(\mathbb{Z}^n,+)$ over $\mathbb{Z}$, $(\mathbb{Q}^n,+)$ over $\mathbb{Q}$, and $(\mathbb{R}^n,+)$ over $\mathbb{R}$ all are lcscN modules over syndetic rings. Moreover, the $p$-adic integer (syndetic) ring $\mathbb{Z}_p$ and the (syndetic) $p$-adic number field $\mathbb{Q}_p$ both are lcscN as modules over themselves. See \cite{Lan} for more examples.

Let $G$ be an lcsc $R$-module and let $(X,\mathscr{X},\mu)$ be a standard Borel probability space. We will consider a measure-preserving Borel $G$-action dynamical system:
\begin{gather*}
T\colon G\times X\rightarrow X\quad \textrm{or write}\quad G\curvearrowright_TX\label{eq0.1}
\end{gather*}
where $T_g\colon X\rightarrow X$ is $\mu$-preserving for each $g\in G$ and the $G$-action map $T\colon (g,x)\mapsto T_g(x)$ is jointly measurable from $G\times X$ to $X$. For brevity, we will write
\begin{gather*}
T(t\cdot g,x)=T_{t\cdot g}(x)=T_g^t(x)=g^tx,\quad \forall t\in R\textit{ and }g\in G,
\end{gather*}
if no confusion. Then given any $g\not=e$ the identity element of $G$, \
\begin{gather*}
T_g\colon R\times X\rightarrow X\quad  \textit{by }\ (t,x)\mapsto T_g^t(x)=g^tx
\end{gather*}
defines a new $\mu$-preserving Borel $R$-system.

A metatheorem of dynamical theory states that whenever the underlying space of the dynamical system is appropriately bounded in the sense of topology or measure theory, the orbits of the motion will necessarily exhibit some form of recurrence, or return close to their initial position. The first precise theorem of this kind was formulated by H.~Poincar\'{e} 1890~\cite{Poi}.

A quantitative recurrence theorem is due to A.\,Y.~Khintchine as follows. Let $\mathbb{R}$ be equipped with the standard Euclidean topology. Then Khintchine's Recurrence Theorem says that:
\begin{itemize}
\item If $T\colon\mathbb{R}\times X\rightarrow X$ is a $C^0$-flow on a compact metric space $X$ preserving a Borel probability measure $\mu$, then for any $E\in\mathscr{B}_X$ with $\mu(E)>0$ and $\varepsilon>0$, the set
    \begin{gather*}
    \left\{t\in\mathbb{R}\,|\,\mu(E\cap T^{-t}E)>\mu(E)^2-\varepsilon\right\}
    \end{gather*}
    is relatively dense in $\mathbb{R}$ (cf.~\cite{Khi, NS}).
\end{itemize}
Poincar\'{e}'s Recurrence Theorem says that $\mu(E\cap T^{-t}E)>0$ for infinitely many values of $t\in\mathbb{R}$; but the Khintchine Recurrence Theorem says that $\mu(E\cap T^{-t}E)$ is `large' for `many' values of $t\in\mathbb{R}$. Khintchine's theorem has been extended from $\mathbb{R}$-actions to $\sigma$-compact amenable group actions in \cite{Dai-16}. Notably V.~Bergelson, B.~Host and B.~Kra have recently derived in 2005 \cite{BHK} the following multiple version of Khintchine's theorem for any \textit{$\mu$-ergodic} $\mathbb{Z}$-action $T\colon \mathbb{Z}\times X\rightarrow X$:
\begin{itemize}
\item  Each of the following two returning-time sets, for any $E\in\mathscr{B}_X$ with $\mu(E)>0$ and $\varepsilon>0$, is relatively dense in $\mathbb{Z}$:
\begin{gather*}
\left\{n\in\mathbb{Z}\,|\,\mu(E\cap T^{-n}E\cap T^{-2n}E)>\mu(E)^3-\varepsilon\right\}\intertext{and}
\left\{n\in\mathbb{Z}\,|\,\mu(E\cap T^{-n}E\cap T^{-2n}E\cap T^{-3n}E)>\mu(E)^4-\varepsilon\right\}.
\end{gather*}
See \cite[Theorem~1.2]{BHK}.

\item Also see \cite{Fra, Pot} for polynomials version of the above multiple Khintchine theorem of Bergelson et al.
\end{itemize}

It should be noted that Bergelson et al have proven that the multiple version of Khintchine's theorem does not need to hold for \textit{non-ergodic} $\mathbb{Z}$-actions (even for the $2$-multiple case $\{n,2n\}$, cf.~\cite[Theorems~2.1]{BHK}) and does not need to be true for more \textit{higher multiple} case ($\{n,2n,3n, 4n\}$, cf.~\cite[Theorem~1.3]{BHK}) by constructing counterexamples based on F.A.~Behrend's and I.~Ruzsa's combinatorial results.

However, by using completely different technique framework, we can obtain the following theorem, which claims any measure-preserving standard Borel $G$-system always has a nontrivial factor that possesses the multiple Khintchine recurrence. That is the following

\begin{Thm}[Multiple Khintchine Recurrence]\label{Kh}
Let $G$ be an lcscN $R$-module over a syndetic ring $(R,+,\cdot)$ and let $(X,\mathscr{X},\mu)$ be a nontrivial standard Borel $G$-space. Then there exists a $G$-invariant $\sigma$-subalgebra $\mathscr{X}_1$ of $\mathscr{X}$ with $\{\varnothing,X\}\not=\mathscr{X}_1$ ($\mu$-$\mathrm{mod}$ $0$) such that for any
$A\in\mathscr{X}_1$ with $\mu(A)>0$, any $g_1,\dotsc,g_l\in G$, $l\ge2$, and any $\varepsilon>0$, the set of returning times of big-hitting
\begin{gather*}
N_{g_1,\dotsc,g_l}(A,\varepsilon)=\left\{t\in R\,\big{|}\,\int_Xg_1^t1_A\dotsm g_l^t1_A\,d\mu>\mu(A)^l-\varepsilon\right\},
\end{gather*}
is of positive lower Banach density in $(R,+)$; i.e.,
\begin{gather*}
\liminf_{n\to\infty}\frac{|F_n\cap N_{g_1,\dotsc,g_l}(A,\varepsilon)|}{|F_n|}>0
\end{gather*}
over any weak F{\o}lner sequence $\{F_n\}_{1}^\infty$ in $(R,+)$.
Consequently it is syndetic in $(R,+)$.
\end{Thm}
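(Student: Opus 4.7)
The plan is to combine the Furstenberg structure theorem for lcscN modules developed in \cite{Dai-pre} with a direct equidistribution argument on a Kronecker-type factor. The invariant sub-$\sigma$-algebra $\mathscr{X}_1$ will be chosen case-wise. If the $G$-action on $(X,\mathscr{X},\mu)$ is non-ergodic, I would take $\mathscr{X}_1$ to be the $\sigma$-algebra of $G$-invariant sets, which is nontrivial; for $A\in\mathscr{X}_1$ every $g_i^t1_A$ equals $1_A$, so $\int_Xg_1^t1_A\dotsm g_l^t1_A\,d\mu=\mu(A)\ge\mu(A)^l$ for all $t\in R$ and $N_{g_1,\dotsc,g_l}(A,\varepsilon)=R$. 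When the action is ergodic, I would invoke the structure theorem of \cite{Dai-pre} to produce the maximal isometric (Kronecker) factor: if this factor is nontrivial I let $\mathscr{X}_1$ be the associated sub-$\sigma$-algebra, and in the weakly mixing (trivial-Kronecker) subcase I set $\mathscr{X}_1=\mathscr{X}$ and use the weak-mixing mean ergodic theorem to obtain $\frac{1}{|F_n|}\int_{F_n}g_1^t1_A\dotsm g_l^t1_A\,dt\to\mu(A)^l$ in $L^2(\mu)$.

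In the core Kronecker case the factor is isomorphic to a rotation $T_g(k)=k+\alpha(g)$ on a compact abelian metric group $K$ via a continuous homomorphism $\alpha\colon G\to K$, and $A\in\mathscr{X}_1$ corresponds to a Borel subset of $K$ of Haar measure $\nu(A)=\mu(A)$. I would introduce the continuous function $F\colon K^l\to[0,1]$ defined by
\begin{gather*}
F(k_1,\dotsc,k_l)=\int_K1_A(k+k_1)\dotsm1_A(k+k_l)\,d\nu(k),
\end{gather*}
whose continuity follows from $L^1$-continuity of translation on $K$. Then $\int_Xg_1^t1_A\dotsm g_l^t1_A\,d\mu=F(\alpha(t\cdot g_1),\dotsc,\alpha(t\cdot g_l))$, and since $F(0,\dotsc,0)=\nu(A)>\nu(A)^l$ there is an open neighborhood $U$ of the origin in $K^l$ on which $F>\nu(A)^l-\varepsilon$.

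The positive lower Banach density of $N_{g_1,\dotsc,g_l}(A,\varepsilon)$ will then follow from unique ergodicity. The map $\psi\colon R\to K^l$, $\psi(t)=(\alpha(t\cdot g_1),\dotsc,\alpha(t\cdot g_l))$, is a continuous group homomorphism of $(R,+)$ into the compact abelian group $K^l$, its image closure $H=\overline{\psi(R)}$ is a closed subgroup, and the induced translation $R$-action on $H$ is minimal and hence uniquely ergodic with respect to the Haar measure $m_H$. Approximating $1_{U\cap H}$ from below by nonnegative continuous functions on $H$ and invoking unique ergodicity along any weak F{\o}lner sequence $\{F_n\}_1^\infty$ in $(R,+)$ yields
\begin{gather*}
\liminf_{n\to\infty}\frac{|F_n\cap N_{g_1,\dotsc,g_l}(A,\varepsilon)|}{|F_n|}\ge m_H(U\cap H)>0,
\end{gather*}
and syndeticity follows from positive lower Banach density in the amenable group $(R,+)$. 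The chief technical obstacle is to verify that the Kronecker factor produced by the structure theorem of \cite{Dai-pre} carries an $R$-module-compatible rotation so that $\psi$ is indeed a continuous homomorphism for every $(g_1,\dotsc,g_l)\in G^l$, and that the weakly mixing reduction together with the unique-ergodicity step genuinely work along weak F{\o}lner averages rather than the stronger F{\o}lner sequences normally used in the mean ergodic theorem.
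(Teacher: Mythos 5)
Your non-ergodic case is fine, and your Kronecker case is essentially sound: the unique-ergodicity step does survive weak F{\o}lner averaging, because for every character $\chi$ of $H=\overline{\psi(R)}$ that is nontrivial on $H$ the composition $\chi\circ\psi$ is a nontrivial character of $R$, so $\frac{1}{|F_n|}\int_{F_n}\chi(\psi(t))\,dt\to0$ by translation near-invariance of the averages, and Stone--Weierstrass finishes it. The genuine gap is the weakly mixing case, which is exactly where the difficulty of the theorem lives. Weak mixing of the $G$-action (triviality of its Kronecker factor) does \emph{not} imply that the one-parameter flows $t\mapsto g_i^t$ and $t\mapsto (g_ig_j^{-1})^t$ are weakly mixing, or even ergodic, so there is no ``weak-mixing mean ergodic theorem'' yielding $\frac{1}{|F_n|}\int_{F_n}g_1^t1_A\dotsm g_l^t1_A\,dt\to\mu(A)^l$. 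Concretely, let $(Y,\nu,T)$ be weakly mixing and let $G=\mathbb{Z}^2$ (an lcscN module over the syndetic ring $\mathbb{Z}$) act on $X=Y\times Y$ by $(m,n)\cdot(y,y')=(T^my,T^ny')$. This $G$-action is weakly mixing with trivial Kronecker factor, yet for $A=B\times B$, $g_1=(1,0)$ and $g_2=(2,0)$ one computes $\int_Xg_1^t1_A\,g_2^t1_A\,d\mu=\nu(B\cap T^tB)\,\nu(B)\to\nu(B)^3\neq\nu(B)^4=\mu(A)^2$. Here the recurrence inequality happens to survive, but your argument for it does not; in general the limits of such multiple averages over a weakly mixing $G$-action are governed by characteristic factors other than the Kronecker factor (the Furstenberg--Katznelson difficulty for commuting transformations), so setting $\mathscr{X}_1=\mathscr{X}$ in this case is unjustified.

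The paper avoids your trichotomy entirely. By Theorem~\ref{thm0.2} the first nontrivial factor $\X_1$ in the Furstenberg chain is ``chaotic'': $G$ splits as a direct product $G_c\times G_w$ of $R$-submodules with $(X,\mathscr{X}_1,\mu,G_c)$ compact and $(X,\mathscr{X}_1,\mu,G_w)$ \emph{totally} weak-mixing, so every $g_i$ factors as $S_iH_i$ with $S_i\in G_w$, $H_i\in G_c$. Proposition~\ref{prop4.1} then treats the $S_i$ via the van der Corput induction of Corollary~\ref{cor3.8} --- which uses weak mixing element by element, precisely the hypothesis your ``weakly mixing $G$-action'' case lacks --- and treats the $H_i$ via precompactness of orbits, intersecting a density-one set with a set of positive lower density. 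To rescue your strategy you would need a structure theorem producing a nontrivial factor on which an element-wise weak-mixing hypothesis holds relative to a compact part; that is what \cite{Dai-pre} supplies, and it is not the classical Kronecker/weak-mixing dichotomy.
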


Here a sequence of nonull compact subsets $F_n$ of positive Haar measure in $(R,+)$ with a Haar measure $|\cdot|$, which is such that
\begin{equation*}
\lim_{n\to\infty}\frac{|(r+F_n)\vartriangle F_n|}{|F_n|}=0\quad\forall r\in R,
\end{equation*}
is called a \textit{weak F{\o}lner sequence} in $(R,+)$.\footnote{Here and in the future, unlike the case in literature, we neither need to require
\begin{equation*}
\lim_{n\to\infty}\frac{|(K+F_n)\vartriangle F_n|}{|F_n|}=0\quad\forall K\subset R\textit{ compact}
\end{equation*}
nor other regularities such as Tempelman condition or Shulman condition (cf.~\cite{EW}).}

\subparagraph{Outline of the proof of Theorem~\ref{Kh}}
For example, $(\mathbb{R}^n,+)$ is an lcscN $\mathbb{R}$-module but it is not a free abelian group. So our Theorem~\ref{Kh} is already beyond the framework of Furstenberg-Katznelson \cite{FK,Fur}.
To prove Theorem~\ref{Kh} in the (not necessarily ergodic) probabilistic settings, our main tool is the following recent structure theorem.

\begin{Thm}[{Furstenberg Structure Theorem~\cite{Dai-pre}}]\label{thm0.2}
Let $G$ be an lcscN $R$-module over a syndetic ring $(R,+,\cdot)$. Then for any nontrivial standard Borel $G$-system $\X=(X,\mathscr{X},\mu,G)$, there exists an ordinal $\eta$ and a system of $G$-factors $\left\{\pi_\xi\colon\X\rightarrow\X_\xi\right\}_{\xi\le\eta}$ such that:
\begin{enumerate}
\item[$(a)$] $\X_0$ is the one-point $G$-system and $\X_\eta=\X$ ($\mu$-$\mathrm{mod}$ $0$).
\item[$(b)$] If $0\le\theta<\xi\le\eta$, then there is a factor $G$-map $\pi_{\xi,\theta}\colon\X_\xi\rightarrow\X_\theta$ with $\pi_\theta=\pi_{\xi,\theta}\circ\pi_\xi$.
\item[$(c)$] For each ordinal $\xi$ with $0\le\xi<\eta$, $\pi_{\xi+1,\xi}\colon\X_{\xi+1}\rightarrow\X_\xi$ is a nontrivial ``primitive'' extension.
\item[$(d)$] If $\xi$ is a limit ordinal $\le\eta$, then $\X_\xi=\underleftarrow{\lim}_{\theta<\xi}\X_\theta$.
\end{enumerate}
Moreover, the intermediate factors are of the form
\begin{gather*}
\X_\xi=(X,\mathscr{X}_\xi,\mu,G),\quad \pi_\xi=\textit{Id}_X\quad \textit{and}\quad \pi_{\xi+1,\xi}=\textit{Id}_X\quad (0<\xi<\eta).
\end{gather*}
Here we refer to
\begin{gather*}
\X\rightarrow\X_\eta\rightarrow\dotsm\rightarrow\X_{\xi+1}\rightarrow\X_\xi\rightarrow\dotsm\rightarrow\X_1\rightarrow\X_0
\end{gather*}
a ``Furstenberg factors chain'' of $\X$.
\end{Thm}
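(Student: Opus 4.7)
The plan is to construct the Furstenberg factors chain by a transfinite recursion on the ordinal index $\xi$, producing at each stage a $G$-invariant sub-$\sigma$-algebra $\mathscr{X}_\xi$ of $\mathscr{X}$, so that the factor $\X_\xi=(X,\mathscr{X}_\xi,\mu,G)$ is realized on the same underlying point set $X$ and the factor maps $\pi_\xi$ become literal identities, exactly as the ``moreover'' clause requires. I would start with $\mathscr{X}_0=\{\varnothing,X\}$ ($\mu$-mod $0$), corresponding to the one-point factor in (a). At a limit ordinal $\xi$, I would set $\mathscr{X}_\xi=\sigma\!\left(\bigcup_{\theta<\xi}\mathscr{X}_\theta\right)$, which realizes the inverse-limit clause (d). The transfinite recursion must halt at some countable ordinal $\eta$ with $\mathscr{X}_\eta=\mathscr{X}$ ($\mu$-mod $0$) by a standard counting argument: any strictly increasing transfinite chain of sub-$\sigma$-algebras of a countably generated $\sigma$-algebra on a standard Borel probability space has length at most $\omega_1$, and in fact countable.

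The crux is the successor step. Assuming $\mathscr{X}_\xi\subsetneq\mathscr{X}$ properly, I must produce $\mathscr{X}_{\xi+1}\supsetneq\mathscr{X}_\xi$ together with a proof that $\X_{\xi+1}\to\X_\xi$ is primitive. The plan is first to fix a precise notion of \emph{primitive extension} adapted to $G$-actions where $G$ is an lcscN $R$-module; in the spirit of Furstenberg's work this should amount to an extension that carries both a relatively almost periodic (compact/isometric) component and a relatively weakly mixing component, interlocked so that any strictly intermediate factor violates a rigid dichotomy. The key lemma is then a Dichotomy Theorem: every nontrivial $G$-extension $\X\to\X_\xi$ contains a nontrivial primitive intermediate extension above $\X_\xi$. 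I would prove this by decomposing the conditional Hilbert module $L^2(\mu\,|\,\mathscr{X}_\xi)$ into its almost-periodic (Kronecker) part and its relatively weakly mixing complement, using a Mackey--Zimmer-style spectral/Hilbert-module argument to extract a $G$-invariant sub-$\sigma$-algebra $\mathscr{X}_{\xi+1}$ strictly above $\mathscr{X}_\xi$. Clause (c)'s nontriviality is then built in by always taking the strict enlargement whenever the chain has not yet exhausted $\mathscr{X}$, and clause (b) follows by the monotonicity of the construction.

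The main obstacle will be producing almost-periodic elements in the conditional Hilbert module under the weak hypotheses that $G$ and $R$ are merely locally compact second countable, rather than discrete or compactly generated. Here the two standing hypotheses carry the weight: syndeticity of $Rt$ for every $t\neq0$ ensures that the $R$-subflow $T_g$ generated by any nonzero $g\in G$ has orbit structure rich enough that relative almost periodicity can be detected along weak F\o lner averages in $(R,+)$, replacing the classical use of discrete ergodic averages; and the Noetherian ascending chain condition on $R$-submodules of $G$ provides the finiteness needed to prevent the conditional decomposition from being pathologically infinite and to guarantee stabilization of naturally derived chains, in turn certifying that the primitive dichotomy can indeed be invoked at every stage. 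Once the Dichotomy Theorem is established in this generality, clauses (a)--(d) follow by routine transfinite bookkeeping, and the uniform realization $\pi_\xi=\mathrm{Id}_X$ is simply recorded by phrasing the whole construction at the level of nested sub-$\sigma$-algebras on the fixed space $X$.
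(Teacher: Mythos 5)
You should first be aware that the paper you are matched against contains no proof of Theorem~\ref{thm0.2} at all: the theorem is imported verbatim from the companion paper \cite{Dai-pre}, and the present paper only uses its consequence (that the first intermediate factor $\X_1$ is chaotic, i.e.\ compact for $G_c$ and totally weak-mixing for $G_w$ with $G=G_c\times G_w$) to prove Theorem~\ref{Kh}. So your sketch can only be judged against the classical Furstenberg--Katznelson architecture \cite{FK,Fur} that \cite{Dai-pre} adapts, and at the level of skeleton you reproduce it faithfully: realizing all factors as nested $G$-invariant sub-$\sigma$-algebras on the fixed standard space $X$ makes $\pi_\xi=\mathrm{Id}_X$ and the limit clause $(d)$ automatic, and your termination argument (an ordinal-indexed strictly increasing chain of closed subalgebras of the separable measure algebra of a standard Borel probability space stabilizes at a countable ordinal) is the standard one and is correct.

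The genuine gap is that the entire mathematical content of the theorem is concentrated in your ``Dichotomy Theorem'', which you name but neither state precisely nor prove. Your working gloss of ``primitive'' is also off: it is not a condition about intermediate factors violating a dichotomy; in this setting (see \S\ref{sec4.2} and the sentence following Theorem~\ref{thm0.2}) a primitive extension is one for which $G$ admits a direct-product decomposition $G=G_c\times G_w$ into $R$-submodules such that the extension is relatively compact for $G_c$ and totally relatively weak-mixing for $G_w$, where relative weak-mixing must be taken in the module sense of Definition~\ref{def3.1} ($\langle g\rangle_R$-invariant functions, not merely $\langle g\rangle_{\mathbb{Z}}$-invariant ones; the paper stresses these notions genuinely differ once $R\neq\mathbb{Z}$). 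A proof of your successor step therefore has to do three things you do not do: (i) prove the single-element dichotomy (existence of nontrivial relatively almost periodic functions versus relative weak-mixing in the $R$-module sense) with only joint measurability of the action and F{\o}lner averaging over $(R,+)$ available, which is exactly where syndeticity of $Rt$ is needed so that the mean ergodic theorem detects $\langle g\rangle_R$-invariance; (ii) show that the elements of $G$ exhibiting a nontrivial relatively compact part generate a closed $R$-submodule and use the Noetherian ascending chain condition to take a maximal one; and (iii) produce the \emph{direct-product} complement $G_w$ demanded by clause $(c)$, a genuine algebraic step since a closed submodule of an lcsc module is not a direct factor in general. Your one-sentence appeal to a ``Mackey--Zimmer-style spectral/Hilbert-module argument'' gestures at (i) and says nothing about (ii) or (iii). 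As written, the proposal is a correct outline of the known strategy that reduces Theorem~\ref{thm0.2} to an unproven lemma --- and that lemma is the whole content of the cited companion paper.
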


Since $\X_0$ is just the one-point $G$-system, them the first intermediate factor $\X_1=(X,\mathscr{X}_1,\mu,G)$ in the Furstenberg factors chain of $\X$ is such that $(X,\mathscr{X}_1,\mu,G_c)$ is compact and $(X,\mathscr{X}_1,\mu,G_w)$ is totally weak-mixing, where $G=G_c\times G_w$ is some direct product of two $R$-submodules of $G$. Then we only need to show $\X_1$ is a \textit{Kh}-system. See Proposition~\ref{prop4.1} in $\S\ref{sec4}$.

The remainder of this paper will be organized as follows:

\tableofcontents
\subsection*{Acknowledgments}
Finally, the author is deeply grateful to Professor Hillel Furstenberg for many helpful suggestions, comments, and carefully checking the details of the original manuscript.

\section{A van der Corput-type lemma}\label{sec1}
In this section we will prove a technical tool\,---\,a van der Corput-type lemma\,---\,for proving the \textit{Kh}-property of a totally weak-mixing system in $\S\ref{sec3}$.
\begin{itemize}
\item Let $(R,+)$ be an lcsc abelian group with the zero element $\e$ and with a fixed Haar measure $|\centerdot|$ or $dt$ such that $|R|=\infty$ in the sequel of this section. Here $R$ is not necessarily a topological ring.
\end{itemize}
We can then take a weak F{\o}lner sequence $\{F_n\}_1^\infty$ in $(R,+)$; that is, $\{F_n\}_1^\infty$ is a sequence of compact subsets of positive Haar measure of $R$ satisfying the so-called \textit{weak F{\o}lner condition}~\cite{Pat}:
\begin{gather*}
\lim_{n\to\infty}\frac{|(r+F_n)\vartriangle F_n|}{|F_n|}=0\quad \forall r\in R.
\end{gather*}
Given any measurable subset $S$ of $R$, its \textit{upper density over $\{F_n\}_1^\infty$} is defined as follows:
$$
\mathrm{D}^*(S)=\limsup_{n\to\infty}\frac{|S\cap F_n|}{|F_n|}.
$$
If $\mathrm{D}^*(S)=0$  over $\{F_n\}_1^\infty$ then we say that $S$ has \textit{density $0$ over $\{F_n\}_1^\infty$}. By the weak F{\o}lner condition we note that over the same weak F{\o}lner sequence $\{F_n\}_1^\infty$ in $(R,+)$,
\begin{gather*}
\mathrm{D}^*(S)=\mathrm{D}^*(S+r)\quad \forall r\in R.
\end{gather*}
We can similarly define the \textit{lower density} $\mathrm{D}_*(S)$ of $S$ over $\{F_n\}_1^\infty$ by replacing the $\limsup$ by $\liminf$ and further define \textit{density} $\mathrm{D}(S)$ over $\{F_n\}_1^\infty$.

We shall speak of convergence in density over $\{F_n\}_1^\infty$ for a family of points but for a subfamily of $0$-density. The following is the precise definition.

\begin{defn}\label{def1.1}
Let $(R,+)$ be an lcsc abelian group and $\{F_n\}_{1}^\infty$ a weak F{\o}lner sequence in it.
\begin{enumerate}
\item[(1)] For a Borel measurable function $x_\centerdot\colon R\rightarrow X$ from $R$ into a topological space $X$, we say that \textit{$x_\centerdot$ converges in density over $\{F_n\}_1^\infty$ to a point $x\in X$} if for every neighborhood $V$ of $x$ in $X$, $x_t\in V$ but for a set of $t$ of $\{F_n\}_1^\infty$-density $0$. We write
    \begin{gather*}\textrm{D-}{\lim}_{t\in R}x_t=x\quad \textrm{over }\{F_n\}_1^\infty.\end{gather*}

\item[(2)] Let $a_{\centerdot}\colon R\rightarrow\mathbb{R}$ be a measurable function. The ess.l.u.b. (possibly infinite)
\begin{gather*}
\textrm{D-}{\limsup}_{t\in R}a_t:=\inf\big{\{}c\colon \mathrm{D}^*(\{t\in R\,|\,a_t>c\})=0~\textrm{over}~\{F_n\}_1^\infty\big{\}}
\intertext{or equivalently}
\textrm{D-}{\limsup}_{t\in R}a_t=\inf\big{\{}c\colon \mathrm{D}(\{t\in R\,|\,a_t\le c\})=1~\textrm{over}~\{F_n\}_1^\infty\big{\}}
\end{gather*}
is called the \textit{$\limsup$ of $a_\centerdot$} or \textit{$(a_t)_{t\in R}$ in density over $\{F_n\}_1^\infty$}.
\item[(3)] We can similarly define $\textrm{D-}\liminf_{t\in R}a_t$ for any $a_{\centerdot}\colon R\rightarrow\mathbb{R}$ by using ess.g.l.b.
\end{enumerate}
\end{defn}
\noindent
Clearly, if $a_\centerdot$ or $(a_t)_{t\in R}$ is bounded, then
$$-\infty<\textrm{D-}{\liminf}_{t\in R}a_t\le\textrm{D-}{\limsup}_{t\in R}a_t<\infty$$
over any $\{F_n\}_1^\infty$. In addition, whenever $a_\centerdot\ge0$ and $\textrm{D-}\limsup_{t\in R}a_t=0$, then $\textrm{D-}\lim_{t\in R}a_t=0$ over $\{F_n\}_1^\infty$. Note that $\textrm{D-}\lim_{t\in R}a_t$ is completely different with the Moore-Smith limit involving a directed system. See \cite[Sections~4.2 and 7.2]{Fur} for the special case of $R=\mathbb{Z}$ or $\mathbb{Z}_+$ with the discrete topology.

We shall need the following notions and lemma.

\begin{NLem}\label{lem1.2}
Let $\mathcal{F}_+$ be the family of all the Borel subsets of $R$ with positive upper density over the same weak F{\o}lner sequence $\{F_n\}_1^\infty$ in $(R,+)$. Then
\begin{itemize}
\item $\mathcal{F}_+$ has the ``Ramsey property'' (cf.~\cite[Def.~9.1]{Fur} for $R=\mathbb{Z}$), i.e., if $S_1\cup S_2\in\mathcal{F}_+$ and $S_1,S_2\in\mathscr{B}_R$ then either $S_1\in\mathcal{F}_+$ or $S_2\in\mathcal{F}_+$;
\item $\mathcal{F}_+$ has the ``van der Corput property'', i.e., for $S\in\mathcal{F}_+$, $\left\{q\in R\,|\,(S-q)\cap S\in\mathcal{F}_+\right\}\in\mathcal{F}_+$.
\end{itemize}
We define the dual family $\mathcal{F}_+^*$ to consist of all those Borel subsets of $R$ that intersect non-voidly each member of $\mathcal{F}_+$. Then
$\mathcal{F}_+^*=\left\{S\in\mathscr{B}_R\,|\,\mathrm{D}(S)=1\textrm{ over }\{F_n\}_1^\infty\right\}$.
\end{NLem}

\begin{proof}
The Ramsey property of $\mathcal{F}_+$ trivially holds and the van der Corput property follows clearly from the translation invariance of upper/lower densities over any given weak F{\o}lner sequence $\{F_n\}_1^\infty$ in $(R,+)$.
\end{proof}

We shall need the following lemma, which is a generalization of \cite[Lemma~4.8]{Fur} for the special case that $R=\mathbb{Z}$.

\begin{Lem}\label{B}
Let $Q\subset R$ be a Borel subset of density $1$ over $\{F_n\}_1^\infty$ and for each $q\in Q$ let $R_q$ be a Borel subset of $R$ of density $1$ over $\{F_n\}_1^\infty$. Let $S$ be a Borel subset of $R$ with $\mathrm{D}^*(S)>0$ over $\{F_n\}_1^\infty$ and let $k\ge1$ be any given integer. Then there exist $k$ distinct points $r_1,\dotsc,r_k$ in $S$ such that each $r_j-r_i\in Q$ and $r_i\in R_{r_j-r_i}$ for $1\le i<j\le k$.
\end{Lem}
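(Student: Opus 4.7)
\textbf{Plan of proof of Lemma~\ref{B}.}
The plan is to proceed by induction on $k$, parametrising the desired $k$-tuple in an arithmetic-progression form
\[
r_i = r + (q_1 + q_2 + \cdots + q_{i-1}), \qquad i = 1, \ldots, k,
\]
where the common differences $q_1, \ldots, q_{k-1} \in R$ are chosen independently of the anchor $r \in R$. With this parametrisation every pairwise difference $r_j - r_i = q_i + q_{i+1} + \cdots + q_{j-1}$ depends only on the $q_l$'s, so the requirement ``$r_j - r_i \in Q$'' becomes a condition on $(q_1,\ldots,q_{k-1})$ alone, while ``$r_i \in R_{r_j-r_i}$'' becomes membership of $r$ in a translate of the density-$1$ set $R_{r_j-r_i}$.

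The base case $k=1$ is immediate. For the inductive step I would maintain, for each $l = 0, 1, \ldots, k-1$, nonzero elements $q_1, \ldots, q_l$ whose every consecutive sub-sum $q_i + \cdots + q_j$ ($i \le j \le l$) lies in $Q$, together with a seed set
\[
T_l := \bigcap_{i=0}^{l} \bigl( S - (q_1 + \cdots + q_i)\bigr) \in \mathcal{F}_+,
\]
so that every $r \in T_l$ already furnishes $l+1$ elements of $S$ in progression. To pass from stage $l$ to stage $l+1$, the inclusion $T_l \subset S - (q_1 + \cdots + q_l)$ yields the key containment
\[
T_l \cap (T_l - q_{l+1}) \subset T_{l+1} \quad \text{for any } q_{l+1} \in R,
\]
so by the van der Corput property of Lemma~\ref{lem1.2} the set of candidate $q_{l+1}$'s with $T_{l+1} \in \mathcal{F}_+$ has positive upper density over $\{F_n\}_1^\infty$.

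I then trim the candidate set by intersecting with the $l+1$ translates $Q - (q_i + \cdots + q_l)$ for $i = 1, \ldots, l+1$, each of density $1$, which forces every new sub-sum $q_i + \cdots + q_{l+1}$ into $Q$; and I remove the finite (hence density-$0$) set of $q_{l+1}$'s that would make any such sub-sum vanish, which will guarantee that the final $r_i$ are distinct. By the translation-invariance of upper density over the weak F{\o}lner sequence $\{F_n\}$, the trimmed candidate set is still in $\mathcal{F}_+$, so a valid $q_{l+1}$ exists. Iterating $k-1$ times produces $(q_1, \ldots, q_{k-1})$ and a seed set $T := T_{k-1} \in \mathcal{F}_+$.

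Finally I would intersect $T$ with
\[
\bigcap_{1 \le i < j \le k} \bigl( R_{q_i + \cdots + q_{j-1}} - (q_1 + \cdots + q_{i-1}) \bigr),
\]
a finite intersection of translates of density-$1$ sets, hence itself of density $1$. The resulting set stays in $\mathcal{F}_+$, so is nonempty, and any $r$ in it together with $r_i := r + (q_1 + \cdots + q_{i-1})$ supplies the required $k$ distinct points. The main obstacle I expect is the simultaneous bookkeeping at each inductive stage: one must check that after applying van der Corput to $T_l$ one can still afford to cut the candidate set of $q_{l+1}$'s by the density-$1$ translates of $Q$ and by the finite vanishing-sub-sum set without leaving $\mathcal{F}_+$. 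This rests on translation-invariance of density along $\{F_n\}$ together with the Ramsey-type properties of $\mathcal{F}_+$ recorded in Lemma~\ref{lem1.2}.
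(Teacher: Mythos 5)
Your proposal is correct and follows essentially the same inductive scheme as the paper's proof: maintain a positive-upper-density seed set of anchors together with fixed shifts, use the van der Corput property of $\mathcal{F}_+$ from Lemma~\ref{lem1.2} to place the next shift inside the intersection with the density-$1$ translates of $Q$, and cut by translates of the $R_q$'s (the paper performs this cut at every stage rather than once at the end, a purely cosmetic difference, as is your reparametrisation of the shifts by consecutive increments). Your explicit removal of the finitely many increments that would make a sub-sum vanish is a small but welcome addition, since it is what actually guarantees the $r_i$ are distinct.
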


\begin{proof}
As in the $\mathbb{Z}$ case~\cite[Lemma~4.8]{Fur}, we shall show by induction that there exists a Borel set $S_k\subset S$ of positive upper density over $\{F_n\}_1^\infty$, and a set of $k$ points $t_1,\dotsc,t_k$ in $R$ such that for each $r\in S_k$ the $k$-tuple $r_1=r+t_1,\dotsc,r_k=r+t_k$ has the properties sought in the lemma.

For $k=1$ there is nothing needed to prove by letting $S_1=S, t_1=\e$; so suppose that $S_k$ and $\{t_1,\dotsc,t_k\}$ have been found. We shall find $t_{k+1}$ such that $t_{k+1}-t_i\in Q$ for $1\le i\le k$, and a Borel subset $S_{k+1}$ of $S_k$ with positive upper density over $\{F_n\}_1^\infty$ such that for each $r\in S_{k+1}$ the additional conditions, $r+t_{k+1}\in S$ and $r+t_i\in R_{t_{k+1}-t_i}, 1\le i\le k$, are satisfied.

We first note that since $S_k$ has positive upper density over $\{F_n\}_1^\infty$, then by Lemma~\ref{lem1.2}
$$
S_k^*=\big{\{}t\in R\colon \mathrm{D}^*((S_k-t)\cap S_k)>0~\textrm{over }\{F_n\}_1^\infty\big{\}}
$$
is of positive upper density over $\{F_n\}_1^\infty$. We now choose $t_{k+1}$ in $S_k^*\cap\bigcap_{i\le k}(Q+t_i)$. We can then obtain $t_{k+1}-t_i\in Q$ for $1\le i\le k$. Finally we set
$$
S_{k+1}=(S_k-t_{k+1})\cap S_k\cap\bigcap_{i\le k}(R_{t_{k+1}-t_i}-t_i).
$$
This set has positive upper density over $\{F_n\}_1^\infty$ since $(S_k-t_{k+1})\cap S_k$ does so and $\bigcap_{i\le k}(R_{t_{k+1}-t_i}-t_i)$ has density $1$ over $\{F_n\}_1^\infty$.

The proof of Lemma~\ref{B} is thus completed.
\end{proof}

Motivated by \cite[Lemmas~4.9, 7.5 and 9.24]{Fur}, \cite[Theorem~4.6]{B} and \cite[Theorem~7.11]{EW}, by Lemma~\ref{B} we can obtain the following useful technical result, which is another version of the classical van der Corput lemma~\cite{Cor}.

\begin{Lem}\label{lem1.4}
Let $x_\centerdot\colon R\rightarrow\bH$ be a bounded Borel measurable function, with $\|x_t\|\le\beta\ \forall t\in R$, from an \textit{lcsc} abelian $(R,+)$ into a Hilbert space $\bH$ with the inner product $\langle\cdot,\cdot\rangle$ and suppose that
\begin{gather*}
\textrm{$\mathrm{D}$-}{\lim}_{r\in R}\big{(}\textrm{$\mathrm{D}$-}{\limsup}_{t\in R}|\langle x_{r+t},x_t\rangle|\big{)}=0
\end{gather*}
over a F{\o}lner sequence $\{F_n\}_1^\infty$ in $(R,+)$. Then
\begin{gather*}
\textrm{$\mathrm{D}$-}{\lim}_{t\in R}x_t=0
\end{gather*}
in $\bH$ under the weak topology (namely, $\mathrm{D}$-$\lim_{t\in R}\langle x_t,x\rangle=0\; \forall x\in\bH$) over $\{F_n\}_1^\infty$.
\end{Lem}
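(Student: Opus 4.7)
I would argue by contradiction in three steps: first, reduce a hypothetical failure of weak density-convergence $x_t\to0$ to the existence of a unit vector $y\in\bH$ and a positive-upper-density set $\widetilde S\subseteq R$ on which $\mathrm{Re}\langle x_t,y\rangle$ stays above a positive constant; second, convert the hypothesis into density-$1$ data suitable for Lemma~\ref{B}; third, apply that lemma to extract a near-orthogonal configuration inside $\widetilde S$ and derive a contradiction via the classical Hilbert-space $L^2$ expansion.

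Suppose the conclusion fails. Then some $y\in\bH$, which we rescale to $\|y\|=1$, and some $\varepsilon>0$ make
\begin{equation*}
S=\{t\in R:|\langle x_t,y\rangle|>\varepsilon\}
\end{equation*}
a member of $\mathcal{F}_+$. Partition $S$ into finitely many pieces according to the argument of $\langle x_t,y\rangle$ (two sign-pieces in the real case, four or more sectors in the complex case). By the Ramsey property in Lemma~\ref{lem1.2}, at least one such piece remains in $\mathcal{F}_+$; after multiplying $y$ by an appropriate unimodular scalar, this yields $\widetilde S\in\mathcal{F}_+$ together with a constant $\varepsilon'>0$ (a fixed positive fraction of $\varepsilon$) such that $\mathrm{Re}\langle x_t,y\rangle>\varepsilon'$ for every $t\in\widetilde S$.

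Fix $\delta>0$ to be chosen later. By the hypothesis and Definition~\ref{def1.1}, the set
\begin{equation*}
Q=\left\{r\in R:\textrm{D-}\limsup\nolimits_{t\in R}|\langle x_{r+t},x_t\rangle|<\delta\right\}
\end{equation*}
has density $1$ over $\{F_n\}_1^\infty$, and for every $q\in Q$ the set $R_q=\{t\in R:|\langle x_{q+t},x_t\rangle|<\delta\}$ also has density $1$. Apply Lemma~\ref{B} to $\widetilde S$ with these $Q$ and $\{R_q\}_{q\in Q}$: for any preassigned integer $k\ge1$ we obtain $r_1,\dotsc,r_k\in\widetilde S$ with $r_j-r_i\in Q$ and $r_i\in R_{r_j-r_i}$ for $1\le i<j\le k$, whence $|\langle x_{r_i},x_{r_j}\rangle|<\delta$ for all $i\ne j$. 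The identity
\begin{equation*}
\Bigl\|\sum_{i=1}^k x_{r_i}\Bigr\|^2=\sum_{i=1}^k\|x_{r_i}\|^2+\sum_{i\ne j}\langle x_{r_i},x_{r_j}\rangle
\end{equation*}
gives $\|\sum_i x_{r_i}\|^2\le k\beta^2+k(k-1)\delta$, while $\mathrm{Re}\langle\sum_i x_{r_i},y\rangle>k\varepsilon'$ combined with Cauchy-Schwarz and $\|y\|=1$ forces $\|\sum_i x_{r_i}\|^2\ge k^2{\varepsilon'}^2$. The two estimates together yield $k{\varepsilon'}^2\le\beta^2+(k-1)\delta$, which is violated once $\delta<{\varepsilon'}^2/2$ and $k>2\beta^2/{\varepsilon'}^2$, the desired contradiction.

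The only genuinely non-routine step is the opening reduction, in which $y$ must be modified so that $\mathrm{Re}\langle x_t,y\rangle$ has a uniform positive lower bound on a set of positive upper density; the Ramsey property of $\mathcal{F}_+$ recorded in Lemma~\ref{lem1.2} is precisely what legitimises this move. Once it is in place, Lemma~\ref{B} delivers the near-orthogonal configuration automatically and the classical van der Corput $L^2$ estimate closes the argument.
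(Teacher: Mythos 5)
Your proposal is correct and follows essentially the same route as the paper: the same contradiction setup producing a positive-upper-density set $S$, the same density-one sets $Q$ and $R_q$ fed into Lemma~\ref{B}, and the same quadratic counting contradiction (the paper shifts to $y_i=x_{r_i}-\varepsilon v$ to force pairwise negative inner products, while you compare upper and lower bounds for $\bigl\|\sum_i x_{r_i}\bigr\|^2$ directly via Cauchy--Schwarz; these are interchangeable forms of the same estimate). Your explicit sector decomposition handling the complex case is in fact slightly more careful than the paper's bare ``without loss of generality, let $\bH$ be a real Hilbert space.''
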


\begin{proof}
Without loss of generality, let $\bH$ is a real Hilbert space.
In contrast to the statement of Lemma~\ref{lem1.4}, we can find some number $\varepsilon$ with $0<\varepsilon<1$ and some unit vector $v\in\bH$ such that
\begin{gather*}
S:=\{t\in R\colon \langle x_t,v\rangle\ge\varepsilon\}\in\mathcal{F}_+\quad \textrm{over }\{F_n\}_1^\infty.
\end{gather*}
We then for $0<\delta<\varepsilon^2$ let
$$
Q=\left\{q\in R\colon\textrm{D-}\limsup_{t\in R}|\langle x_{q+t},x_t\rangle|<\frac{\delta}{2}\right\}.
$$
So $Q$ has density $1$ over $\{F_n\}_1^\infty$ by Def.~\ref{def1.1} and for each $q\in Q$, the Borel set
$$
R_q=\left\{r\in R\colon|\langle x_{q+r},x_r\rangle|<\delta\right\}
$$
has density $1$ over $\{F_n\}_1^\infty$. Applying Lemma~\ref{B} to these sets with $k$ to be specified later. If $r_1,\dotsc,r_k$ satisfy the conclusion of Lemma~\ref{B}, then we shall have that
$$
\langle x_{r_i},v\rangle>\varepsilon,\ 1\le i\le k,\quad \textrm{and}\quad |\langle x_{r_i}, x_{r_j}\rangle|<\delta,\ 1\le i<j\le k.
$$
Set $y_i=x_{r_i}-\varepsilon v$ for $1\le i\le k$. Then
$$
\langle y_i,y_j\rangle<\delta-2\varepsilon+\varepsilon^2<\delta-\varepsilon^2<0,\quad 1\le i<j\le k.
$$
But since the $y_i$ are bounded independently of $k$, and
\begin{align*}
0&\le\|y_1+\dotsm+y_k\|^2=\sum_{i=1}^k\|y_i\|^2+2\sum_{i<j}\langle y_i,y_j\rangle\\
&\le k\max_{1\le i\le k}\|y_i\|^2-k(k-1)\left(\varepsilon^2-\delta\right)
\end{align*}
which tends to $-\infty$ as $k\to\infty$, we thus arrive at a contradiction. 

Therefore the set $S$ in question must have density $0$ over $\{F_n\}_1^\infty$ for any $x$ and any $\varepsilon>0$.
This proves Lemma~\ref{lem1.4}.
\end{proof}

The above proof is different with other versions \cite[p.~445]{BL} and \cite[Theorem~7.11]{EW}. In addition, let $\{\xi_n; n=1,2,\dotsc\}$ be a sequence of independent random variable on a probability space $(\Omega,\mathscr{F},\mathrm{P})$ with $\xi_n\sim N(0,1)$ for all $n\ge1$. Then $\langle\xi_{n+m},\xi_n\rangle=E_\mathrm{P}(\xi_{n+m}\xi_n)=0$ for $m\not=0$ and so this shows that in Lemma~\ref{lem1.4}, we cannot expect that $\textrm{$\mathrm{D}$-}{\lim}_{t\in R}\|x_t\|=0$.
\section{Inverse limits of \textit{Kh}-systems}\label{sec2}
In this section, let $G$ be lcsc $R$-module. Let $(X,\mathscr{X},\mu)$ be any Borel $G$-space and we write $\X=(X,\mathscr{X},\mu,G)$. For any $T\in G$, we write $tT=T^t$ for any $t\in R$. We will consider a kind of multiple dynamics---\textit{Kh}-systems.

Motivated by Khintchine's recurrence theorem and \cite{BHK}, we now introduce the following notation:

\begin{defn}\label{def2.1}
For $\X=(X,\mathscr{X},\mu,G)$, we shall say that $\X$ is a \textit{Kh-system} (\textit{Kh} is for Khintchine), provided that for any integer $l\ge2$,
\begin{itemize}
\item if $f\in \mathfrak{L}^\infty(X,\mathscr{X},\mu)$ with $f\ge0$ \textit{a.e.} and $\int_Xfd\mu>0$, then for any $T_1,\dotsc,T_l\in G$ and any $\varepsilon>0$, the set
\begin{gather*}
\left\{t\in R\,\big{|}\,\int_XT_1^tf\dotsm T_l^tfd\mu>\left(\int_Xfd\mu\right)^l-\varepsilon\right\}
\end{gather*}
is of positive lower Banach density; that is, it is of positive lower density over any weak F{\o}lner sequence $\{F_n\}_1^\infty$ in $(R,+)$.
\end{itemize}
\end{defn}

We can then obtain the following result.

\begin{prop}\label{prop2.2}
Let $\{\mathscr{X}_\theta;\theta\in\Theta\}$ be a totally ordered family of $\sigma$-subalgebras of $\mathscr{X}$ and set $\mathscr{Y}=\sigma\big{(}\bigcup_\theta\mathscr{X}_\theta\big{)}$. If each $(X,\mathscr{X}_\theta,\mu,G)$ is a \textit{Kh}-system, then
$(X,\mathscr{Y},\mu,G)$ is also a \textit{Kh}-system.
\end{prop}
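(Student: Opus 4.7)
The plan is to approximate an arbitrary nonnegative $\mathscr{Y}$-measurable test function by its conditional expectations onto the $\mathscr{X}_\theta$, and then transfer the big-hitting property from the approximant to the original function uniformly in $t$.

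Fix $f\in\mathfrak{L}^\infty(X,\mathscr{Y},\mu)$ with $f\ge0$ $\mu$-a.e.\ and $a:=\int_X f\,d\mu>0$, choose $T_1,\dotsc,T_l\in G$ ($l\ge2$) and $\varepsilon>0$. For each $\theta\in\Theta$ set $f_\theta=E_\mu(f\,|\,\mathscr{X}_\theta)$. Then $f_\theta\ge0$ a.e., $\int f_\theta\,d\mu=a$, and $\|f_\theta\|_\infty\le\|f\|_\infty=:M$. Since $\{\mathscr{X}_\theta\}$ is totally ordered and $\mathscr{Y}=\sigma\bigl(\bigcup_\theta\mathscr{X}_\theta\bigr)$, the reverse martingale / increasing-net convergence theorem yields $f_\theta\to f$ in $L^1(\mu)$ along an increasing cofinal subnet of $\Theta$.

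Next I would estimate the uniform-in-$t$ discrepancy by a telescoping identity
\begin{gather*}
\int_X T_1^tf\dotsm T_l^tf\,d\mu-\int_X T_1^tf_\theta\dotsm T_l^tf_\theta\,d\mu=\sum_{j=1}^l\int_X\Bigl(\prod_{i<j}T_i^tf_\theta\Bigr)T_j^t(f-f_\theta)\Bigl(\prod_{i>j}T_i^tf\Bigr)\,d\mu.
\end{gather*}
Since every $T_i^t$ is $\mu$-preserving, each factor other than the $j$-th has $L^\infty$-norm at most $M$, and $\|T_j^t(f-f_\theta)\|_1=\|f-f_\theta\|_1$. Hence the left side is bounded in absolute value by $lM^{l-1}\|f-f_\theta\|_1$, independently of $t\in R$. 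Choose $\theta$ large enough that this bound is $<\varepsilon/2$.

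Now $f_\theta\in\mathfrak{L}^\infty(X,\mathscr{X}_\theta,\mu)$ is nonnegative with $\int f_\theta\,d\mu=a>0$, so by hypothesis the Kh-property of $(X,\mathscr{X}_\theta,\mu,G)$ applied to $f_\theta$ gives that
\begin{gather*}
\Bigl\{t\in R\,\Big{|}\,\int_X T_1^tf_\theta\dotsm T_l^tf_\theta\,d\mu>a^l-\varepsilon/2\Bigr\}
\end{gather*}
has positive lower density over every weak F{\o}lner sequence $\{F_n\}_1^\infty$ in $(R,+)$. The uniform bound above shows every such $t$ satisfies $\int T_1^tf\dotsm T_l^tf\,d\mu>a^l-\varepsilon$, so the corresponding set for $f$ contains this one and therefore also has positive lower Banach density. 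This establishes the Kh-property for $(X,\mathscr{Y},\mu,G)$.

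The only subtle step is the $L^1$-approximation $f_\theta\to f$: one must invoke the martingale-type convergence for a directed (here totally ordered and hence cofinal-monotone) system of $\sigma$-algebras whose union generates $\mathscr{Y}$. Everything else is a routine telescoping and an application of the hypothesis at a single well-chosen $\theta$.
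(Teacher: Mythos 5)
Your proposal is correct and follows essentially the same route as the paper: approximate $f$ by $E_\mu(f\,|\,\mathscr{X}_\theta)$ for a well-chosen $\theta$ (the paper gets this from density of $\bigcup_\theta\mathfrak{L}^2(X,\mathscr{X}_\theta,\mu)$ in $\mathfrak{L}^2(X,\mathscr{Y},\mu)$, you from martingale convergence along the totally ordered family --- equivalent here), bound the discrepancy of the multicorrelation integrals uniformly in $t$ by telescoping, and apply the Kh-hypothesis at level $\theta$ with $\varepsilon/2$. Your write-up is in fact more explicit than the paper's at the telescoping step, where the paper only sketches the error estimate.
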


\begin{proof}
Let $f\in \mathfrak{L}^\infty(X,\mathscr{Y},\mu)$ with $f\ge0$ \textit{a.e.} and $\int_Xfd\mu>0$, and let $T_1,\dotsc,T_l\in G$ and $\varepsilon>0$. We may assume $0\le f(x)\le1/2$ \textit{a.e.} without loss of generality.
Let $\epsilon>0$ be any given with $\epsilon<\varepsilon/2$. Since $\mathscr{Y}=\sigma\big{(}\bigcup_\theta\mathscr{X}_\theta\big{)}$ and $\{\mathscr{X}_\theta\}$ is totally ordered, then it follows that one can find some $f^\prime\in\mathfrak{L}^\infty(X,\mathscr{X}_\theta,\mu)$ with $\|f-f^\prime\|_2\ll\epsilon$ for some $\theta\in\Theta$. Simply write $\tilde{f}=E_\mu(f|\mathscr{X}_\theta)$. Then $0\le\tilde{f}\le1/2$ \textit{a.e.} and
$\|f-\tilde{f}\|_2\ll\epsilon$.
Hence we can set $f=\tilde{f}+\phi_{\theta}$ where $|\phi_{\theta}|\le 1$ \textit{a.e.} and $\|\phi_{\theta}\|_2\ll\epsilon$, and further for any $t\in R$
\begin{gather*}
T_1^tf\dotsm T_l^tf=T_1^t\tilde{f}\dotsm T_l^t\tilde{f}+\ldots,\quad \|\ldots\|_2<\epsilon\intertext{and further for any $t\in R$}
\int_XT_1^tf\dotsm T_l^tfd\mu\ge\int_XT_1^t\tilde{f}\dotsm T_l^t\tilde{f}d\mu-\epsilon.
\end{gather*}
Since $(X,\mathscr{X}_\theta,\mu,G)$ is a \textit{Kh}-system and $\int_Xfd\mu=\int_X\tilde{f}d\mu>0$, then the set
\begin{gather*}
\left\{t\in R\,\big{|}\,\int_XT_1^t\tilde{f}\dotsm T_l^t\tilde{f}d\mu>\left(\int_X\tilde{f}d\mu\right)^l-\epsilon\right\}
\end{gather*}
is of positive lower density over any weak F{\o}lner sequence $\{F_n\}_1^\infty$ in $(R,+)$. This implies the statement by Def.~\ref{def2.1} for $\epsilon<\varepsilon/2$.

The proof of Proposition~\ref{prop2.2} is therefore completed.
\end{proof}

This result immediately leads to the following important fact.

\begin{cor}\label{cor2.3}
Let $\{\X_\xi\}$ be a Furstenberg factors chain of $\X=(X,\mathscr{X},\mu,G)$ in accordance with Theorem~\ref{thm0.2}. Assume that the ordinal $\xi$ is a limit ordinal and that $\X_\theta$ is a \textit{Kh}-system for each $\theta<\xi$. Then
$\X_\xi$ is a \textit{Kh}-system.
\end{cor}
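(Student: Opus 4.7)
The plan is to observe that this corollary is essentially an immediate consequence of Proposition~\ref{prop2.2} once the inverse limit structure provided by Theorem~\ref{thm0.2} is translated into the language of $\sigma$-algebras.

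First, I would unpack the hypothesis using Theorem~\ref{thm0.2}. Because the intermediate factors all live on the same underlying measure space as $\X$ (we have $\X_\theta=(X,\mathscr{X}_\theta,\mu,G)$ and $\pi_\theta=\mathrm{Id}_X$), the factor maps $\pi_{\xi',\theta}\colon \X_{\xi'}\to\X_\theta$ for $\theta\le\xi'<\xi$ translate into inclusions $\mathscr{X}_\theta\subseteq\mathscr{X}_{\xi'}$ of $G$-invariant $\sigma$-subalgebras of $\mathscr{X}$. Consequently, $\{\mathscr{X}_\theta\}_{\theta<\xi}$ is a totally ordered (increasing) family of $G$-invariant $\sigma$-subalgebras of $\mathscr{X}_\xi$.

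Next, since $\xi$ is a limit ordinal, clause~$(d)$ of Theorem~\ref{thm0.2} tells us that $\X_\xi=\underleftarrow{\lim}_{\theta<\xi}\X_\theta$. In measure-theoretic terms, the inverse limit of an increasing tower of factors with common underlying space is precisely the $\sigma$-algebra generated by the union, so (modulo $\mu$-null sets) we have
\begin{gather*}
\mathscr{X}_\xi=\sigma\Big{(}\bigcup_{\theta<\xi}\mathscr{X}_\theta\Big{)}.
\end{gather*}

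Now by hypothesis $(X,\mathscr{X}_\theta,\mu,G)$ is a \textit{Kh}-system for every $\theta<\xi$. Setting $\Theta=\{\theta:\theta<\xi\}$ and $\mathscr{Y}=\mathscr{X}_\xi$, the assumptions of Proposition~\ref{prop2.2} are met verbatim, so Proposition~\ref{prop2.2} yields that $\X_\xi=(X,\mathscr{X}_\xi,\mu,G)$ is a \textit{Kh}-system.

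No real obstacle arises: the only thing to be careful about is the measure-theoretic interpretation of the inverse limit in part~$(d)$ of Theorem~\ref{thm0.2}, ensuring that it genuinely coincides with the $\sigma$-algebra generated by $\bigcup_{\theta<\xi}\mathscr{X}_\theta$ up to $\mu$-null sets; this is guaranteed by the ``$\pi_\xi=\mathrm{Id}_X$'' normalization built into the structure theorem, which avoids the need to track coherent families of maps and lets Proposition~\ref{prop2.2} apply without modification.
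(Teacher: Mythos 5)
Your proposal is correct and follows essentially the same route as the paper: the paper's proof likewise notes that Theorem~\ref{thm0.2} makes $\{\mathscr{X}_\theta;\theta<\xi\}$ a totally ordered family and then invokes Proposition~\ref{prop2.2}, with the inverse-limit identification $\mathscr{X}_\xi=\sigma\big(\bigcup_{\theta<\xi}\mathscr{X}_\theta\big)$ left implicit. You have simply spelled out the measure-theoretic translation in more detail than the paper does.
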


\begin{proof}
According to Theorem~\ref{thm0.2}, $\{\mathscr{X}_\theta; \theta<\xi\}$ is totally ordered. Then the statement follows from the foregoing Proposition~\ref{prop2.2}.
\end{proof}

Proposition \ref{prop2.2} shows that the \textit{Kh}-property survives taking inverse limits.

\section{\textit{Kh}-property of totally relatively weak-mixing extensions}\label{sec3}
Let $G$ be an lcsc $R$-module with the identity $e$ and with the continuous scalar multiplication $(t,g)\mapsto g^t$ of $R\times G$ to $G$.
Let there be any given a short factors series:
\begin{gather*}
\X=(X,\mathscr{X},\mu,G)\xrightarrow[]{\textit{Id}_X}\X^\prime=(X,\mathscr{X}^\prime,\mu,G)\xrightarrow[]{\pi}\Y=(Y,\mathscr{Y},\nu,G),
\end{gather*}
where $(X,\mathscr{X},\mu)$ is a standard Borel $G$-space so we can decompose $\mu=\int_Y\mu_yd\nu(y)$ and where $\Y=(Y,\mathscr{Y},\nu,G)$.
Based on the measure-preserving system $\X=(X,\mathscr{X},\mu,G)$, we regard every element $g$ of $G$ as a $\mu$-preserving transformation of $X$ onto itself and we also often identify $g$ with the unitary operator $U_g\colon\phi\mapsto\phi\circ g$ of the $\mathfrak{L}^p$-function spaces if no confusion, for $p\ge1$.

\subsection{Relatively weak-mixing extensions}\label{sec3.1}
Recall from the viewpoint of group as in \cite{FK, Fur} that an extension $\pi\colon\X\rightarrow\Y$ is referred to as \textit{relatively weak-mixing for an element $g$} in $G$ if every $g$-invariant (or equivalently $\{g^n\,|\,n\in\mathbb{Z}\}$-invariant) function $H$ in $\mathfrak{L}^2(X,\mathscr{X},\mu)\otimes_{\Y}\mathfrak{L}^2(X,\mathscr{X},\mu)$ is a function on $(Y,\mathscr{Y},\nu)$ via the relative-product factor $G$-map
\begin{gather*}
{\pi\times_{\Y}\pi}\colon X\times X\rightarrow Y;\quad (x_1,x_2)\mapsto x_1.
\end{gather*}
Then we say $\pi\colon\X\rightarrow\Y$ is \textit{relatively weak-mixing for $G$} if $\pi\colon\X\rightarrow\Y$ is relatively weak-mixing for every $g, g\not=e$, in $G$ (cf.~\cite[Def.~6.3]{Fur}).

Now for any subset $S$ of the $R$-module $G$, by $\langle S\rangle_R$ we denote the submodule generated by the elements $g\in S$ over the ring $(R,+,\cdot)$. We note that for any $H\in\mathfrak{L}^2(X,\mathscr{X},\mu)\otimes_{\Y}\mathfrak{L}^2(X,\mathscr{X},\mu)$, it is $g$-invariant if and only if it is $\langle g\rangle_\mathbb{Z}$-invariant. However, this is not the case about $\langle g\rangle_R$-invariance when $g\not=e$ in general.
More generally than the above relatively weak-mixing in the literature, we now will introduce other relatively weak-mixing condition for $R$-modules.

\begin{defn}[\cite{Dai-pre}]\label{def3.1}
Let $\varGamma$ be a subset of the \textit{lcsc} $R$-module $G$ with $\varGamma\not=\{e\}$ and $g\in G, g\not=e$. Then $\pi\colon\X^\prime\rightarrow\Y$ is said to be
\begin{enumerate}
\item[$(a)$] \textit{relatively weak-mixing for $g$} if every $\langle g\rangle_R$-invariant $H\in\mathfrak{L}^2(X,\mathscr{X}^\prime,\mu)\otimes_{\Y}\mathfrak{L}^2(X,\mathscr{X}^\prime,\mu)$ is a function on $(Y,\mathscr{Y},\nu)$ via ${\pi\times_{\Y}\pi}\colon X\times X\rightarrow Y$.
\item[$(b)$] \textit{totally relatively weak-mixing for $\varGamma$} if ${\pi}\colon\X^\prime\rightarrow\Y$ is relatively weak-mixing for each $g$ in $\varGamma$ with $g\not=e$ under the sense of $(a)$;

\item[$(c)$] \textit{jointly relatively ergodic for $\varGamma$} if every $\langle\varGamma\rangle_R$-invariant function $\varphi\in\mathfrak{L}^2(X,\mathscr{X}^\prime,\mu)$ is a function on $(Y,\mathscr{Y},\nu)$ via ${\pi}\colon\X^\prime\rightarrow\Y$.

\item[$(d)$] \textit{jointly relatively weak-mixing for $\varGamma$} if ${\pi\times_{\Y}\pi}\colon\X^\prime\times_{\Y}\X^\prime\rightarrow\Y$ is jointly relatively ergodic for $\varGamma$ in the sense of $(c)$.
\end{enumerate}
\end{defn}

Since every $\langle\varGamma\rangle_R$-invariant function is $\langle g\rangle_R$-invariant for each $g\in\varGamma$, a totally relatively weak-mixing extension must be a jointly relatively weak-mixing extension for $\varGamma$; but the converse does not need to be true by considering a one-point factor $G$-system $\Y$.

In addition, if $\pi\colon\X^\prime\rightarrow\Y$ is relatively weak-mixing for $g$, then it is not necessarily totally relatively weak-mixing for $\langle g\rangle_R$ unless $rR=R$ for each $r\not=0$ like $R$ to be a field.

When $(R,+,\cdot)=(\mathbb{Z},+,\cdot)$, Def.~\ref{def3.1}-$(a)$ and $(b)$ coincide exactly with the classical case in the literature~\cite{FK, Fur}. However, if $(R,+,\cdot)\not=(\mathbb{Z},+,\cdot)$, then our relatively weak-mixing is weaker than that of \cite{FK, Fur}.

\begin{Lem}\label{lem3.2}
If $\pi\colon\X^\prime=(X,\mathscr{X}^\prime,\mu,G)\rightarrow\Y=(Y,\mathscr{Y},\nu,G)$ is a relatively ergodic extension for an element $T$ in $G$, then for any $\varphi\in \mathfrak{L}^2(X,\mathscr{X}^\prime,\mu)$ with $E_\mu(\varphi|\pi^{-1}[\mathscr{Y}])=0$ (or equivalently $E_\mu(\varphi|\Y)=0$), we have
\begin{equation*}
\lim_{n\to\infty}\frac{1}{|F_n|}\int_{F_n}T^t\varphi\,dt=0
\end{equation*}
in $\mathfrak{L}^2$-norm, over any weak F{\o}lner sequence $\{F_n\}_1^\infty$ in $(R,+)$.
\end{Lem}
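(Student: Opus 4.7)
My plan is to reduce the statement to the $\mathfrak{L}^2$ mean ergodic theorem for the unitary $R$-representation $t\mapsto T^t$ on $\bH:=\mathfrak{L}^2(X,\mathscr{X}^\prime,\mu)$, and then invoke the relative ergodicity hypothesis to identify the limiting ergodic projection of $\varphi$ with zero.

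First I would note that the joint continuity of the scalar multiplication $(t,g)\mapsto g^t$ from $R\times G$ to $G$ makes $t\mapsto T^t$ a strongly continuous unitary representation of $(R,+)$ on $\bH$. Let $\bH^T\subset\bH$ be the closed subspace of $\langle T\rangle_R$-invariant vectors and let $P\colon\bH\rightarrow\bH^T$ be the orthogonal projection. The analytic heart of the argument is the mean ergodic theorem
\begin{equation*}
\lim_{n\to\infty}\frac{1}{|F_n|}\int_{F_n}T^t\xi\,dt=P\xi\quad\text{in }\bH,\qquad\xi\in\bH,
\end{equation*}
valid for every weak F{\o}lner sequence $\{F_n\}_1^\infty$ in $(R,+)$. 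Although the classical formulation is stated for strong F{\o}lner sequences, only the ``soft'' $\bH$-version is needed here. I would derive it via the orthogonal decomposition $\bH=\bH^T\oplus(\bH^T)^\perp$, where $(\bH^T)^\perp$ is the closed linear span of coboundaries $T^s\eta-\eta$ ($s\in R$, $\eta\in\bH$). On $\bH^T$ the averages are constant; on a coboundary, a change of variable gives
\begin{equation*}
\int_{F_n}T^t(T^s\eta-\eta)\,dt=\int_{F_n+s}T^u\eta\,du-\int_{F_n}T^u\eta\,du,
\end{equation*}
of norm at most $|(F_n+s)\vartriangle F_n|\cdot\|\eta\|$; dividing by $|F_n|$ and invoking the weak F{\o}lner condition yields convergence to zero. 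A routine $3\varepsilon$-approximation extends this to the whole of $(\bH^T)^\perp$.

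To finish, I would invoke relative ergodicity of $\pi$ for $T$: by Def.~\ref{def3.1}(c) applied to the singleton $\varGamma=\{T\}$, every $\langle T\rangle_R$-invariant element of $\bH$ is $\pi^{-1}[\mathscr{Y}]$-measurable, so $\bH^T\subset\mathfrak{L}^2(X,\pi^{-1}[\mathscr{Y}],\mu)$. The hypothesis $E_\mu(\varphi|\pi^{-1}[\mathscr{Y}])=0$ then makes $\varphi$ orthogonal to $\mathfrak{L}^2(X,\pi^{-1}[\mathscr{Y}],\mu)$ and in particular to $\bH^T$, so $P\varphi=0$. Combining this with the mean ergodic theorem gives the asserted $\mathfrak{L}^2$-convergence of the F{\o}lner averages of $T^t\varphi$ to zero.

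The only genuine obstacle is the mean ergodic theorem under the \emph{weak} F{\o}lner condition, since most references assume a strong F{\o}lner, Tempelman, or Shulman condition. The saving observation is that $\mathfrak{L}^2$-convergence is a much weaker conclusion than the pointwise ergodic theorem, and the coboundary-plus-invariant argument above uses nothing beyond the translation-invariance of the averaging functionals in $(R,+)$, which is precisely what the weak F{\o}lner condition encodes.
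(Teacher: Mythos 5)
Your proof is correct and takes essentially the same route as the paper: both reduce the claim to the $\mathfrak{L}^2$ mean ergodic theorem over weak F{\o}lner sequences and then use relative ergodicity for $T$ to place the $\langle T\rangle_R$-invariant subspace inside $\mathfrak{L}^2(X,\pi^{-1}[\mathscr{Y}],\mu)$, so that the ergodic projection of $\varphi$ vanishes. The only difference is cosmetic: the paper cites the mean ergodic theorem (identifying the limit as $E_\mu(\varphi|\mathscr{X}^\prime_{\mu,\langle T\rangle_R})$) whereas you supply the standard invariant-plus-coboundary proof of it, correctly observing that only the weak F{\o}lner condition is needed for norm convergence.
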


\begin{proof}
Let $\mathscr{X}_{\mu,\langle T\rangle_R}^\prime=\{A\in\mathscr{X}^\prime\,|\,\mu(T^{-t}(A)\vartriangle A)=0\ \forall t\in R\}$. By the $\mathfrak{L}^2$-mean ergodic theorem (cf.~\cite{EW} or more precisely \cite{Dai-16}), it follows that
\begin{equation*}
\mathfrak{L}^2(\mu)\textit{-}\lim_{n\to\infty}\frac{1}{|F_n|}\int_{F_n}{T^t}\varphi\,dt=E_\mu\left(\varphi|\mathscr{X}_{\mu,\langle T\rangle_R}^\prime\right).
\end{equation*}
By hypothesis $\pi\colon\X^\prime\rightarrow\Y$ is relatively ergodic for the element $T$ so that $\mathscr{X}_{\mu,\langle T\rangle_R}^\prime\subseteq\pi^{-1}[\mathscr{Y}]$ and then
$E_\mu\left(\varphi|\mathscr{X}_{\mu,\langle T\rangle_R}^\prime\right)=E_\mu\big{(}E_\mu(\varphi|\pi^{-1}[\mathscr{Y}])|\mathscr{X}_{\mu,\langle T\rangle_R}^\prime\big{)}\equiv0$ \textit{a.e.} which yields the desired result.
\end{proof}

This lemma is a generalization of \cite[Lemma~6.1]{Fur}. The following consequence of it is a generalization of \cite[Lemma~1.3]{FK} or \cite[Proposition~6.2]{Fur} (also see \cite[Proposition~7.30]{EW}).

\begin{prop}\label{prop3.3}
If $\pi\colon\X^\prime\rightarrow\Y$ is relatively weak-mixing for some $T$, where $T\in G$ with $T\not=e$, and $\phi,\psi\in \mathfrak{L}^\infty(X,\mathscr{X}^\prime,\mu)$, then
$$
\lim_{n\to\infty}\frac{1}{|F_n|}\int_{F_n}\int_Y\left|E_\mu(\psi T^t\phi|\Y)(y)-E_\mu(\psi|\Y) T^tE_\mu(\phi|\Y)(y)\right|^2\nu(dy)dt=0,
$$
over any weak F{\o}lner sequence $\{F_n\}_1^\infty$ in $(R,+)$.
\end{prop}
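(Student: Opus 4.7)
The plan is to run Furstenberg's classical strategy in the $R$-module setting: first reduce to the case $E_\mu(\phi|\Y)=0$ by peeling off the $\Y$-measurable part of $\phi$, then lift the remaining integral over $Y$ to a single inner product on the relative product $(X\times_\Y X,\mu\times_\Y\mu)$, and finally combine the $\mathfrak{L}^2$-mean ergodic theorem for $(R,+)$-actions (already invoked in the proof of Lemma~\ref{lem3.2}) with the relatively weak-mixing hypothesis of Def.~\ref{def3.1}-$(a)$.

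For the reduction, write $\phi=\tilde\phi+\phi_\Y$, where $\phi_\Y:=E_\mu(\phi|\pi^{-1}[\mathscr{Y}])$ is $\pi^{-1}[\mathscr{Y}]$-measurable and $E_\mu(\tilde\phi|\Y)=0$. Since $\Y$ is a $G$-factor, $T^t\phi_\Y$ remains $\pi^{-1}[\mathscr{Y}]$-measurable for every $t\in R$, and a short computation with the disintegration $\mu=\int_Y\mu_y\,d\nu(y)$ gives
\begin{gather*}
E_\mu(\psi T^t\phi_\Y|\Y)=T^t\phi_\Y\cdot E_\mu(\psi|\Y)=E_\mu(\psi|\Y)\cdot T^tE_\mu(\phi|\Y),
\end{gather*}
so the $\phi_\Y$-piece exactly cancels the subtracted term and it suffices to handle $\tilde\phi$. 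For the main step, using the disintegration once more shows that for every $t\in R$
\begin{gather*}
\int_Y|E_\mu(\psi T^t\tilde\phi|\Y)|^2\,d\nu=\langle\Psi,(T\times T)^t\Phi\rangle_{\mu\times_\Y\mu},
\end{gather*}
where $\Psi=\psi\otimes\psi$ and $\Phi=\tilde\phi\otimes\tilde\phi$ are bounded elements of $\mathfrak{L}^2(X,\mathscr{X}^\prime,\mu)\otimes_\Y\mathfrak{L}^2(X,\mathscr{X}^\prime,\mu)$. The $\mathfrak{L}^2$-mean ergodic theorem, applied to the $R$-action $t\mapsto T^t\times T^t$ on $(X\times_\Y X,\mu\times_\Y\mu)$, gives that $\frac{1}{|F_n|}\int_{F_n}(T\times T)^t\Phi\,dt$ converges in $\mathfrak{L}^2$ to the orthogonal projection $P\Phi$ onto the $\langle T\rangle_R$-invariant subspace. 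By the relatively weak-mixing hypothesis $P\Phi$ is $\pi^{-1}[\mathscr{Y}]$-measurable, hence $P\Phi=E_{\mu\times_\Y\mu}(P\Phi|\Y)$; but
\begin{gather*}
E_{\mu\times_\Y\mu}\bigl((T\times T)^t\Phi\,\big|\,\Y\bigr)(y)=\bigl[E_\mu(T^t\tilde\phi|\Y)(y)\bigr]^2=\bigl[S^tE_\mu(\tilde\phi|\Y)(y)\bigr]^2=0
\end{gather*}
for every $t$ (with $S$ the induced $G$-action on $\Y$), so $P\Phi=0$. Therefore the Følner averages of $(T\times T)^t\Phi$ tend to $0$ in $\mathfrak{L}^2(\mu\times_\Y\mu)$, and pairing with the fixed vector $\Psi$ gives the desired $\mathfrak{L}^1$-limit.

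The main subtlety is making sure that the ``invariance'' produced by the Følner-average projection really matches the $\langle T\rangle_R$-invariance demanded by Def.~\ref{def3.1}-$(a)$: because the $R$-action on $X\times_\Y X$ is $t\mapsto T^t\times T^t$, the mean ergodic theorem delivers a vector fixed under every $T^t\times T^t$, and this is precisely $\langle T\rangle_R$-invariance. This is where the $R$-module (rather than merely $\mathbb{Z}$-module) version of relatively weak-mixing is essential; in the weaker classical sense a $g$-invariant (i.e.\ $\langle g\rangle_\mathbb{Z}$-invariant) function need not be $\Y$-measurable. A secondary routine check is that $E_\mu(\cdot|\Y)$ commutes with Bochner integration of the Følner averages, so the conditional expectation may be moved inside the $\mathfrak{L}^2$-limit.
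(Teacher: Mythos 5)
Your argument is correct and is essentially the paper's own proof: the same reduction to $E_\mu(\phi|\Y)=0$ via $\hat\phi=\phi-E_\mu(\phi|\pi^{-1}[\mathscr{Y}])$, the same relative-product identity $E_{\mu\otimes_\Y\mu}\bigl((\psi\otimes\bar\psi)(T\times T)^t(\phi\otimes\bar\phi)\,\big|\,\Y\bigr)=|E_\mu(\psi T^t\phi|\Y)|^2$, and the same use of the mean ergodic theorem on $\X^\prime\times_\Y\X^\prime$ together with Def.~\ref{def3.1}-$(a)$ to show the invariant projection vanishes. The only cosmetic differences are that you perform the reduction first rather than last and that you unpack the content of Lemma~\ref{lem3.2} inline instead of citing it.
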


\begin{proof}
Take $\phi,\psi\in \mathfrak{L}^\infty(X,\mathscr{X}^\prime,\mu)$ so that $\phi\otimes\bar{\phi},\psi\otimes\bar{\psi}\in \mathfrak{L}^2(X\times X,\mathscr{X}^\prime\otimes_{\Y}\mathscr{X}^\prime,\mu\otimes_{\Y}\mu)$. By \cite[Proposition~5.12]{Fur},
\begin{align*}
E_{\mu\otimes_{\Y}\mu}\big{(}(\psi\otimes\bar{\psi})T^t(\phi\otimes\bar{\phi})|\Y\big{)}&=E_{\mu\otimes_{\Y}\mu}\big{(}(\psi T^t\phi)\otimes(\overline{\psi T^t\phi})|\Y\big{)}\\
&=\left|E_\mu(\psi T^t\phi|\Y)\right|^2.
\end{align*}
First assume that $E_\mu(\phi|\Y)=0$, then $E_{\mu\otimes_{\Y}\mu}(\phi\otimes\bar{\phi}|\Y)=\left|E_\mu(\phi|\Y)\right|^2=0$. Then by Lemma~\ref{lem3.2} and the above equality, we can obtain
$$
\lim_{n\to\infty}\frac{1}{|F_n|}\int_{F_n}\int_Y\left|E_\mu(\psi T^t\phi|\Y)(y)\right|^2\nu(dy)dt=0.
$$
Now let $\phi$ be an arbitrary bounded function. Then the function $\hat{\phi}=\phi-E_\mu(\phi|\pi^{-1}[\mathscr{Y}])$ satisfies $E_\mu(\hat{\phi}|\Y)=0$. Replacing $\phi$ by $\hat{\phi}$ in the above equality leads to what we need from that
$$
E_\mu\big{(}\psi T^tE_\mu(\phi|\pi^{-1}[\mathscr{Y}])|\Y\big{)}=E_\mu(\psi|\Y)T^tE_\mu(\phi|\Y).
$$
This proves Proposition~\ref{prop3.3}.
\end{proof}

The following is a generalization of \cite[Proposition~1.2]{FK} or \cite[Proposition~6.3]{Fur}.

\begin{Lem}\label{lem3.4}
If $\pi\colon\X^\prime\rightarrow\Y$ is relatively weak-mixing for some $T$, where $T\in G, T\not=e$, and $\pi_2\colon\X_2^\prime\rightarrow\Y$ is relatively ergodic for $T$, then $\pi\times_{\Y}\pi_2\colon\X^\prime\times_{\Y}\X_2^\prime\rightarrow\Y$ is relatively ergodic for $T$. Particularly, $\pi\times_{\Y}\pi\colon\X^\prime\times_{\Y}\X^\prime\rightarrow\Y$ is relatively weak-mixing for $T$.
\end{Lem}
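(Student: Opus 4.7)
The plan is to establish the first assertion (that $\pi\times_{\Y}\pi_2\colon\X^\prime\times_{\Y}\X_2^\prime\to\Y$ is relatively ergodic for $T$); the ``particularly'' clause will then follow by applying the first assertion twice. So let $H\in\mathfrak{L}^2(X\times_{\Y} X_2^\prime,\mu\otimes_{\Y}\mu_2)$ be $\langle T\rangle_R$-invariant. Subtracting $E_{\mu\otimes_{\Y}\mu_2}(H|\Y)$ (which is itself $T$-invariant, being a function on $\Y$), I reduce to the case $E_{\mu\otimes_{\Y}\mu_2}(H|\Y)=0$ and aim to prove $\|H\|_2=0$.

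For each $\epsilon>0$, I approximate $H$ in $L^2$ within $\epsilon$ by a finite sum of pure tensors $H^\prime=\sum_{k=1}^{K}\phi_k\otimes\psi_k$ with $\phi_k\in\mathfrak{L}^\infty(X,\mathscr{X}^\prime,\mu)$ and $\psi_k\in\mathfrak{L}^\infty(X_2^\prime,\mathscr{X}_2^\prime,\mu_2)$. Since $T^tH=H$, the identity $\|H\|_2^2=\frac{1}{|F_n|}\int_{F_n}\langle T^tH,H\rangle\,dt$ differs by $O(\epsilon\|H\|_2)$ from $\frac{1}{|F_n|}\int_{F_n}\langle T^tH^\prime,H^\prime\rangle\,dt$ (by Cauchy--Schwarz in both slots), so it suffices to control this latter quantity.

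Expanding via the disintegration $\mu\otimes_{\Y}\mu_2=\int_Y\mu_y\otimes(\mu_2)_y\,d\nu(y)$ yields
\begin{equation*}
\langle T^tH^\prime,H^\prime\rangle=\sum_{j,k}\int_Y E_\mu(T^t\phi_k\cdot\bar\phi_j|\Y)\,E_{\mu_2}(T^t\psi_k\cdot\bar\psi_j|\Y)\,d\nu.
\end{equation*}
I then invoke Proposition~\ref{prop3.3} (the consequence of $\pi\colon\X^\prime\to\Y$ being relatively weak-mixing for $T$) together with Cauchy--Schwarz to replace each first factor $E_\mu(T^t\phi_k\cdot\bar\phi_j|\Y)$ by $T^tE_\mu(\phi_k|\Y)\cdot E_\mu(\bar\phi_j|\Y)$, with an error that becomes $o(1)$ after averaging over $F_n$ thanks to the uniform $L^\infty$-bounds on the $\psi_k$. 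Lifting $\alpha_k:=E_\mu(\phi_k|\Y)$ to $X_2^\prime$ through $\pi_2$ and setting $\Phi:=\sum_k\alpha_k\psi_k\in\mathfrak{L}^\infty(X_2^\prime,\mathscr{X}_2^\prime,\mu_2)$, the averaged integrand collapses to $\frac{1}{|F_n|}\int_{F_n}\langle T^t\Phi,\Phi\rangle_{L^2(\mu_2)}\,dt$.

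At this point Lemma~\ref{lem3.2}, applied to the extension $\pi_2$ that is relatively ergodic for $T$, forces $\frac{1}{|F_n|}\int_{F_n}T^t\Phi\,dt\to E_{\mu_2}(\Phi|\Y)$ in $L^2(\mu_2)$, and a short computation (using that each $\alpha_k$ is a $\Y$-function on $X_2^\prime$) gives $E_{\mu_2}(\Phi|\Y)=\sum_k\alpha_k E_{\mu_2}(\psi_k|\Y)=E_{\mu\otimes_{\Y}\mu_2}(H^\prime|\Y)$, which lies within $\epsilon$ of $E_{\mu\otimes_{\Y}\mu_2}(H|\Y)=0$. Therefore $\|H\|_2^2\le C\epsilon$, and sending $\epsilon\to 0$ yields $H=0$. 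The last sentence of the lemma follows by iteration: the hypothesis ``$\pi$ is relatively weak-mixing for $T$'' is exactly ``$\pi\times_{\Y}\pi$ is relatively ergodic for $T$''; applying the first part with $\pi_2:=\pi\times_{\Y}\pi$ gives that $\pi\times_{\Y}(\pi\times_{\Y}\pi)$ is relatively ergodic for $T$, and one more application with $\pi_2$ set to this three-fold extension yields that $(\pi\times_{\Y}\pi)\times_{\Y}(\pi\times_{\Y}\pi)$ is relatively ergodic for $T$, which is precisely the statement that $\pi\times_{\Y}\pi$ is relatively weak-mixing for $T$. The main difficulty I foresee is the simultaneous control, in the third paragraph, of the three error sources\,---\,the $L^2$-approximation $H\approx H^\prime$, the replacement of conditional expectations via Proposition~\ref{prop3.3}, and the convergence of the F{\o}lner average of $T^t\Phi$ given by Lemma~\ref{lem3.2}\,---\,which must be managed uniformly in $n$ so that one can first send $n\to\infty$ and only then $\epsilon\to 0$.
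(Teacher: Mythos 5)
Your argument is correct and is essentially the argument the paper itself appeals to: the printed proof is only the one-line remark that the statement ``follows from a slight modification of the proof of \cite[Proposition~1.2]{FK}'', and what you have written is precisely that classical tensor-approximation argument (approximate the invariant $H$ by $\sum_k\phi_k\otimes\psi_k$, use Proposition~\ref{prop3.3} to collapse the first slot, then the mean ergodic theorem via Lemma~\ref{lem3.2} in the second slot, and bootstrap the ``particularly'' clause by two applications of the first part). So you are filling in exactly the details the author omits, not taking a different route.

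One step is stated imprecisely, though the proof survives: Lemma~\ref{lem3.2} does \emph{not} give $\frac{1}{|F_n|}\int_{F_n}T^t\Phi\,dt\to E_{\mu_2}(\Phi|\Y)$. The mean ergodic theorem gives convergence to $P\Phi=E_{\mu_2}(\Phi\,|\,\mathscr{I}_T)$, the projection onto the $\langle T\rangle_R$-invariant $\sigma$-algebra $\mathscr{I}_T$, and relative ergodicity of $\pi_2$ only says $\mathscr{I}_T\subseteq\pi_2^{-1}[\mathscr{Y}]$; since $E_{\mu_2}(\Phi|\Y)$ need not be $T$-invariant (here $T$ acts nontrivially on $Y$), the two conditional expectations differ in general. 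What you actually need, and what does hold, is the bound $\|P\Phi\|_2=\|E_{\mu_2}(E_{\mu_2}(\Phi|\Y)\,|\,\mathscr{I}_T)\|_2\le\|E_{\mu_2}(\Phi|\Y)\|_2$, after which your computation $E_{\mu_2}(\Phi|\Y)=E_{\mu\otimes_{\Y}\mu_2}(H^\prime|\Y)=O(\epsilon)$ finishes the estimate $\|H\|_2^2=\|P\Phi\|^2+O(\epsilon)\le C\epsilon$ exactly as you intend. With that correction the order of limits you worry about in your last paragraph is unproblematic: the approximation error is uniform in $t$ and $n$, the Proposition~\ref{prop3.3} error vanishes as $n\to\infty$ for each fixed finite sum $H^\prime$, and only then does one let $\epsilon\to0$.
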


\begin{proof}
The statement follows from a slight modification of the proof of \cite[Proposition~1.2]{FK}. So we omit the details here.
\end{proof}

The above arguments are still valid for the jointly relatively weak-mixing case. Specially we have the following

\begin{prop}\label{prop3.5}
If $\pi\colon\X^\prime\rightarrow\Y$ is jointly relatively weak-mixing for $G$ itself, and $\phi,\psi\in \mathfrak{L}^\infty(X,\mathscr{X}^\prime,\mu)$, then
$$
\lim_{n\to\infty}\frac{1}{m_G(F_n)}\int_{F_n}\int_Y\left|E_\mu(\psi g\phi|\Y)-E_\mu(\psi|\Y)gE_\mu(\phi|\Y)\right|^2d\nu dg=0,
$$
over any weak F{\o}lner sequence $\{F_n\}_1^\infty$ in $G$, where $m_G$ and $dg$ are the same left Haar measure of the lcsc abelian group $G$.
\end{prop}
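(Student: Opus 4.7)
The plan is to repeat the argument of Proposition~\ref{prop3.3} almost verbatim, but with the $R$-action generated by a single element $T\in G$ replaced by the full $G$-action, and with the weak F{\o}lner averages now taken in the lcsc abelian group $(G,+)$. The two ingredients to upgrade are: (i)~a $G$-action version of Lemma~\ref{lem3.2} valid along weak F{\o}lner sequences in $G$, and (ii)~the observation, based on Definition~\ref{def3.1}$(d)$, that $\pi\times_{\Y}\pi\colon\X^\prime\times_{\Y}\X^\prime\rightarrow\Y$ is jointly relatively ergodic for $G$.

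First I would establish the $G$-analogue of Lemma~\ref{lem3.2}: if $\pi\colon\X^\prime\rightarrow\Y$ is jointly relatively ergodic for $G$ and $\varphi\in\mathfrak{L}^2(X,\mathscr{X}^\prime,\mu)$ satisfies $E_\mu(\varphi|\pi^{-1}[\mathscr{Y}])=0$, then
\begin{equation*}
\lim_{n\to\infty}\frac{1}{m_G(F_n)}\int_{F_n}g\varphi\,dg=0\quad\textrm{in }\mathfrak{L}^2(\mu)
\end{equation*}
over any weak F{\o}lner sequence $\{F_n\}_1^\infty$ in $(G,+)$. This follows from the $\mathfrak{L}^2$-mean ergodic theorem for $\sigma$-compact amenable group actions along weak F{\o}lner sequences (cf.\ \cite{Dai-16}), which identifies the limit with $E_\mu(\varphi|\mathscr{X}^\prime_{\mu,G})$, where $\mathscr{X}^\prime_{\mu,G}$ is the $\sigma$-algebra of $G$-invariant sets; joint relative ergodicity gives $\mathscr{X}^\prime_{\mu,G}\subseteq\pi^{-1}[\mathscr{Y}]$, so the conditional expectation vanishes. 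Note that $\langle G\rangle_R=G$ since $G$ is already closed under the $R$-module operations, so $\langle G\rangle_R$-invariance in Definition~\ref{def3.1}$(c)$ coincides with $G$-invariance here.

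Next I would follow the pattern of Proposition~\ref{prop3.3}. For $\phi,\psi\in\mathfrak{L}^\infty(X,\mathscr{X}^\prime,\mu)$, applying \cite[Proposition~5.12]{Fur} gives
\begin{equation*}
E_{\mu\otimes_{\Y}\mu}\bigl((\psi\otimes\bar\psi)g(\phi\otimes\bar\phi)\big{|}\Y\bigr)=\bigl|E_\mu(\psi g\phi|\Y)\bigr|^2\quad\forall g\in G.
\end{equation*}
Assume first that $E_\mu(\phi|\Y)=0$, so $E_{\mu\otimes_{\Y}\mu}(\phi\otimes\bar\phi|\Y)=0$. Since $\pi$ is jointly relatively weak-mixing for $G$, by Definition~\ref{def3.1}$(d)$ the relative product $\pi\times_{\Y}\pi$ is jointly relatively ergodic for $G$, so the $G$-version of Lemma~\ref{lem3.2} established above, applied to $\varphi=(\psi\otimes\bar\psi)(\phi\otimes\bar\phi)$ on $\X^\prime\times_{\Y}\X^\prime$, yields
\begin{equation*}
\lim_{n\to\infty}\frac{1}{m_G(F_n)}\int_{F_n}\int_Y\bigl|E_\mu(\psi g\phi|\Y)\bigr|^2d\nu\,dg=0.
\end{equation*}
For a general bounded $\phi$, I would reduce to the vanishing case by setting $\hat\phi=\phi-E_\mu(\phi|\pi^{-1}[\mathscr{Y}])$, which satisfies $E_\mu(\hat\phi|\Y)=0$, and using the identity
\begin{equation*}
E_\mu\bigl(\psi\, g\,E_\mu(\phi|\pi^{-1}[\mathscr{Y}])\big{|}\Y\bigr)=E_\mu(\psi|\Y)\,g\,E_\mu(\phi|\Y),
\end{equation*}
which is just the module property of the conditional expectation.

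The main obstacle is step~(i): one needs to apply the mean ergodic theorem for $G$-actions along a \emph{weak} F{\o}lner sequence, without any Tempelman or Shulman regularity. Fortunately this is exactly the framework of \cite{Dai-16}, so the tool is available off the shelf; once it is invoked the rest of the argument is a mechanical transcription of the proofs of Lemma~\ref{lem3.2} and Proposition~\ref{prop3.3}.
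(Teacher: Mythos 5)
Your proposal is correct and coincides with what the paper intends: the paper gives no written proof of Proposition~\ref{prop3.5}, merely asserting that ``the above arguments are still valid for the jointly relatively weak-mixing case,'' and your transcription of Lemma~\ref{lem3.2} and Proposition~\ref{prop3.3} to the $G$-action setting\,---\,the $\mathfrak{L}^2$ mean ergodic theorem along weak F{\o}lner sequences in $(G,+)$, plus the observation that Definition~\ref{def3.1}$(d)$ directly makes $\pi\times_{\Y}\pi$ jointly relatively ergodic for $G$ so that no analogue of Lemma~\ref{lem3.4} is needed\,---\,is precisely that argument. One wording slip worth fixing: the $G$-version of Lemma~\ref{lem3.2} should be applied to $\varphi=\phi\otimes\bar{\phi}$ alone (whose conditional expectation on $\Y$ vanishes), and the resulting average $\frac{1}{m_G(F_n)}\int_{F_n}g(\phi\otimes\bar{\phi})\,dg\to0$ then paired against the fixed bounded function $\psi\otimes\bar{\psi}$; applying it to the product $(\psi\otimes\bar{\psi})(\phi\otimes\bar{\phi})$ as literally written neither produces the quantity being averaged nor satisfies the vanishing hypothesis, but since this is exactly how Proposition~\ref{prop3.3} proceeds, the intent is clear and the proof stands.
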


\subsection{\textit{Kh}-property of totally relatively weak-mixing extensions}\label{sec3.2}
We will now consider the lifting of \textit{Kh}-property of totally relatively weak-mixing extensions. Following Def.~\ref{def3.1} and Def.~\ref{def1.1} we begin with a lemma, which generalizes \cite[Lemmas~7.3 and 7.6 for $R=\mathbb{Z}$]{Fur}. Also see \cite[Proposition~7.30]{EW} for $G=R=\mathbb{Z}$.

\begin{Lem}\label{lem3.6}
If $\pi\colon\X^\prime=(X,\mathscr{X}^\prime,\mu,G)\rightarrow\Y=(Y,\mathscr{Y},\nu,G)$
is relatively weak-mixing for some $T$ where $T\in G$ with $T\not=e$, and if $\psi\in\mathfrak{L}^2(X,\mathscr{X}^\prime,\mu)$ and $\phi$ belongs to $\mathfrak{L}^\infty(X,\mathscr{X}^\prime,\mu)$ with $E_\mu(\phi|\pi^{-1}[\mathscr{Y}])=0$ or equivalently $E_\mu(\phi|\Y)=0$, then
\begin{equation*}
\textrm{$\mathrm{D}$-}{\lim}_{t\in R}\int_X\psi T^t\phi d\mu=0\quad \textit{and}\quad \textrm{$\mathrm{D}$-}{\lim}_{t\in R}\|E_\mu(\psi T^t\phi|\Y)\|_{2,\nu}=0
\end{equation*}
over any weak F{\o}lner sequence $\{F_n\}_1^\infty$ in $(R,+)$.
\end{Lem}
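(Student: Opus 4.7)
The plan is to derive both density limits from Proposition~\ref{prop3.3} (the relatively weak-mixing condition in Ces\`aro-averaged form) combined with Markov's inequality to convert Ces\`aro convergence into density convergence, plus a standard $\mathfrak{L}^2$-approximation to reduce the case $\psi\in\mathfrak{L}^2$ to $\psi\in\mathfrak{L}^\infty$. I would reserve the van der Corput lemma (Lemma~\ref{lem1.4}) for the deeper \textit{Kh}-property arguments rather than invoke it here.

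First I would observe that the first limit is dominated by the second: since $\nu(Y)=1$, Cauchy--Schwarz gives
\begin{equation*}
\left|\int_X \psi T^t\phi\,d\mu\right| = \left|\int_Y E_\mu(\psi T^t\phi|\Y)\,d\nu\right| \le \|E_\mu(\psi T^t\phi|\Y)\|_{2,\nu},
\end{equation*}
so it suffices to establish the second statement. Assume first that $\psi\in\mathfrak{L}^\infty$. Because $E_\mu(\phi|\Y)=0$, the cross term $E_\mu(\psi|\Y)T^tE_\mu(\phi|\Y)$ in Proposition~\ref{prop3.3} vanishes identically, yielding
\begin{equation*}
\lim_{n\to\infty}\frac{1}{|F_n|}\int_{F_n}\|E_\mu(\psi T^t\phi|\Y)\|_{2,\nu}^2\,dt = 0.
\end{equation*}
Setting $a_t:=\|E_\mu(\psi T^t\phi|\Y)\|_{2,\nu}^2$, which is nonnegative and uniformly bounded by $\|\psi\|_\infty^2\|\phi\|_\infty^2$, Markov's inequality yields for every $\epsilon>0$
\begin{equation*}
\frac{|F_n\cap\{t:a_t>\epsilon\}|}{|F_n|}\le\frac{1}{\epsilon}\cdot\frac{1}{|F_n|}\int_{F_n}a_t\,dt\longrightarrow 0.
\end{equation*}
Thus $\mathrm{D}^*\{t:a_t>\epsilon\}=0$ over $\{F_n\}_1^\infty$, whence $\textrm{D-}\lim_t a_t=0$ and the desired norm convergence in density follows.

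To extend to arbitrary $\psi\in\mathfrak{L}^2$, given $\epsilon>0$ I would choose $\psi^\prime\in\mathfrak{L}^\infty$ with $\|\psi-\psi^\prime\|_{2,\mu}<\epsilon/(2\|\phi\|_\infty+1)$. Using $T^t$-invariance of $\mu$ and the contractivity of conditional expectation,
\begin{equation*}
\|E_\mu((\psi-\psi^\prime)T^t\phi|\Y)\|_{2,\nu}\le\|(\psi-\psi^\prime)T^t\phi\|_{2,\mu}\le\|\phi\|_\infty\|\psi-\psi^\prime\|_{2,\mu}<\epsilon/2
\end{equation*}
uniformly in $t$, so
\begin{equation*}
\{t:\|E_\mu(\psi T^t\phi|\Y)\|_{2,\nu}>\epsilon\}\subseteq\{t:\|E_\mu(\psi^\prime T^t\phi|\Y)\|_{2,\nu}>\epsilon/2\},
\end{equation*}
and the latter has $\{F_n\}_1^\infty$-density zero by the bounded case. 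Arbitrariness of $\epsilon$ yields the second D-limit for $\psi\in\mathfrak{L}^2$, and Step~1 then delivers the first.

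The only real subtlety is the upgrade from Ces\`aro convergence to density convergence, which is painless here thanks to nonnegativity and boundedness of $a_t$. I do not expect any genuine obstacle: all the ergodic-theoretic content is already packaged in Proposition~\ref{prop3.3}, and the rest is bookkeeping.
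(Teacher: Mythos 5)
Your proof is correct and follows essentially the same route as the paper: both start from Proposition~\ref{prop3.3}, use $E_\mu(\phi|\Y)=0$ to kill the cross term, and upgrade the Ces\`aro convergence to $\mathrm{D}$-convergence of the nonnegative bounded quantity $\|E_\mu(\psi T^t\phi|\Y)\|_{2,\nu}^2$. If anything, you supply two steps the paper leaves implicit --- the Markov-inequality passage from Ces\`aro to density convergence and the $\mathfrak{L}^2$-approximation extending the result from the dense set of bounded $\psi$ to all $\psi\in\mathfrak{L}^2$ --- both of which are carried out correctly.
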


\begin{proof}
Over any weak F{\o}lner sequence $\{F_n\}_1^\infty$ in $(R,+)$, according to Proposition~\ref{prop3.3} we have, for any $\psi\in\mathfrak{L}^\infty(X,\mathscr{X}^\prime,\mu)$,
$$
\lim_{n\to\infty}\frac{1}{|F_n|}\int_{F_n}\int_Y\left|E_\mu(\psi T^t\phi|\Y)(y)\right|^2\,d\nu(y)dt=0
$$
so that
\begin{align*}
0=\lim_{n\to\infty}\frac{1}{|F_n|}\int_{F_n}\left|\int_YE_\mu(\psi T^t\phi|\Y)d\nu\right|^2dt=\lim_{n\to\infty}\frac{1}{|F_n|}\int_{F_n}\left|\int_X\psi T^t\phi d\mu\right|^2dt.
\end{align*}
This implies that $\textrm{$\mathrm{D}$-}{\lim}_{t\in R}\int_X\psi T^t\phi d\mu=0$ and $\textrm{$\mathrm{D}$-}{\lim}_{t\in R}\|E_\mu(\psi T^t\phi|\Y)\|_{2,\nu}=0$ over $\{F_n\}_1^\infty$ for a dense set of $\psi$.

Thus the proof of Lemma~\ref{lem3.6} is completed.
\end{proof}

The following result is important for us to prove that a totally relatively weak-mixing extension of a \textit{Kh}-system is also a \textit{Kh}-system, which is a generalization of \cite[Proposition~7.4]{Fur} (also see \cite[Theorem~7.27 and Proposition~7.31]{EW} for the special case that $G=R=\mathbb{Z}$).

\begin{prop}\label{prop3.7}
Let $\pi\colon\X^\prime\rightarrow\Y$ be totally relatively weak-mixing for $G$ and let $T_1,\dotsc,T_l\in G$ be distinct with $T_i\not=e$ for $1\le i\le l$. If
$f_1,\dotsc,f_l\in\mathfrak{L}^\infty(X,\mathscr{X}^\prime,\mu)$, then in $\mathfrak{L}^2(X,\mathscr{X}^\prime,\mu)$ with the \textbf{\textit{weak}} topology, it holds that
\begin{gather*}
\mathrm{D}\textit{-}\lim_{t\in R} \left\{\prod_{i=1}^lT_i^tf_i-\prod_{i=1}^lT_i^tE_\mu(f_i|\pi^{-1}[\mathscr{Y}])\right\}=0
\end{gather*}
over any weak F{\o}lner sequence in $(R,+)$.
\end{prop}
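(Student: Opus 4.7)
The plan is to prove the proposition by induction on $l$, using Lemma~\ref{lem1.4} (van der Corput) to reduce from $l$ factors to $l-1$ factors, with the base case and the decay of the ``van der Corput inner products'' both supplied by Lemma~\ref{lem3.6}.

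For the base case $l=1$, writing $f_1 = E_\mu(f_1|\pi^{-1}[\mathscr{Y}]) + \phi_1$ with $E_\mu(\phi_1|\Y) = 0$, the assertion reduces to weak $\mathrm{D}$-convergence of $T_1^t \phi_1$ to $0$, which is exactly Lemma~\ref{lem3.6} tested against an arbitrary $\psi\in\mathfrak{L}^2$. For the inductive step, first expand $f_i = \tilde f_i + \phi_i$ with $\tilde f_i := E_\mu(f_i|\pi^{-1}[\mathscr{Y}])$; every term in the expansion of $\prod_i T_i^t f_i - \prod_i T_i^t \tilde f_i$ contains at least one factor $\phi_j$ with $E_\mu(\phi_j|\Y) = 0$, so it suffices to show that for any bounded $g_1, \dots, g_l$ with $E_\mu(g_l|\Y) = 0$, the vector $x_t := \prod_{i=1}^l T_i^t g_i$ satisfies $\textrm{$\mathrm{D}$-}{\lim}_{t\in R} x_t = 0$ weakly in $\mathfrak{L}^2(X,\mathscr{X}^\prime,\mu)$.

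Apply Lemma~\ref{lem1.4} to $x_t$. Using the commutativity of $G$ and the $\mu$-preservation of $T_l^{-t}$, one computes
\begin{equation*}
\langle x_{r+t}, x_t\rangle = \int_X \psi_r\cdot\prod_{i<l}(T_iT_l^{-1})^t h_i^r\, d\mu,\qquad \psi_r := \bar g_l\cdot T_l^r g_l,\quad h_i^r := T_i^r g_i\cdot\bar g_i.
\end{equation*}
The operators $S_i := T_iT_l^{-1}$ $(i<l)$ are distinct and $\neq e$, and $\pi$ remains totally relatively weak-mixing for each of them. Applying the inductive hypothesis, for each fixed $r$, to $S_1, \dots, S_{l-1}$ and the $l-1$ bounded functions $h_i^r$ gives
\begin{equation*}
\textrm{$\mathrm{D}$-}{\lim}_{t\in R}\left[\prod_{i<l} S_i^t h_i^r - \prod_{i<l} S_i^t E_\mu(h_i^r|\pi^{-1}[\mathscr{Y}])\right] = 0
\end{equation*}
weakly in $\mathfrak{L}^2$. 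Testing against $\psi_r\in\mathfrak{L}^\infty\subset\mathfrak{L}^2$, and noting that $\prod_{i<l} S_i^t E_\mu(h_i^r|\pi^{-1}[\mathscr{Y}])$ is $\Y$-measurable with $\mathfrak{L}^\infty$-norm uniformly bounded by $C := \prod_{i<l}\|g_i\|_\infty^2$, a Cauchy--Schwarz estimate on $\Y$ yields
\begin{equation*}
\textrm{$\mathrm{D}$-}{\limsup}_{t\in R}|\langle x_{r+t}, x_t\rangle|\le C\,\|E_\mu(\psi_r|\Y)\|_{2,\nu},
\end{equation*}
with $C$ independent of $r$ and $t$. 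Since $E_\mu(g_l|\Y) = 0$ and $\pi$ is relatively weak-mixing for $T_l$, Lemma~\ref{lem3.6} (with $\phi=g_l$, $\psi=\bar g_l$, $T=T_l$) gives $\textrm{$\mathrm{D}$-}{\lim}_{r\in R}\|E_\mu(\psi_r|\Y)\|_{2,\nu} = 0$, so the hypothesis of Lemma~\ref{lem1.4} is met and $\textrm{$\mathrm{D}$-}{\lim}_{t\in R} x_t = 0$ weakly.

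The main obstacle is the algebraic book-keeping that identifies the right change of variables producing the identity for $\langle x_{r+t}, x_t\rangle$ in terms of only $l-1$ ``moving'' operators and one ``frozen'' factor $\psi_r$, together with checking that the bound on $\textrm{$\mathrm{D}$-}{\limsup}_{t\in R}|\langle x_{r+t},x_t\rangle|$ holds uniformly in $t$; the remaining measure-theoretic reductions through conditional expectations on $\Y$ and Cauchy--Schwarz are routine by comparison.
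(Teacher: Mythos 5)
Your proposal is correct and follows essentially the same route as the paper: reduction via a multilinear expansion (the paper uses the equivalent telescoping identity) to the case $E_\mu(g_l|\Y)=0$, the same change of variables producing $\langle x_{r+t},x_t\rangle=\int_X\psi_r\prod_{i<l}(T_iT_l^{-1})^th_i^r\,d\mu$, the induction hypothesis plus self-adjointness of $E_\mu(\cdot|\pi^{-1}[\mathscr{Y}])$ and Cauchy--Schwarz to bound $\mathrm{D}$-$\limsup_t|\langle x_{r+t},x_t\rangle|$ by $C\|E_\mu(\psi_r|\Y)\|_{2,\nu}$, and Lemmas~\ref{lem3.6} and \ref{lem1.4} to conclude. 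The only (harmless) deviation is that you settle the base case $l=1$ directly from Lemma~\ref{lem3.6}, whereas the paper routes it through Lemma~\ref{lem1.4} as well.
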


\begin{proof}
Given any distinct elements $T_1,\dotsc,T_l\in G$ with $T_i\not=e$ for $1\le i\le l$ and let $\{F_n\}_1^\infty$ be an arbitrary weak F{\o}lner sequence in $(R,+)$. We will prove by induction on $l$ that over this weak F{\o}lner sequence,
\begin{equation}\label{eq3.1}
 \prod_{i=1}^lT_i^tf_i-\prod_{i=1}^lT_i^tE_\mu(f_i|\pi^{-1}[\mathscr{Y}])\xrightarrow[]{\textit{weakly, $\mathrm{D}$-}}0
\end{equation}
for any $f_1,\dotsc,f_l\in\mathfrak{L}^\infty(X,\mathscr{X}^\prime,\mu)$.

We first consider the case of $l=1$. Let $\phi=f_1-E_\mu(f_1|\pi^{-1}[\mathscr{Y}])$ and set
$\phi_t=T_1^t\phi$. Then by Lemma~\ref{lem3.6}, it follows that
\begin{gather*}
\mathrm{D}\textrm{-}\lim_{r\in R}\left(\mathrm{D}\textrm{-}\lim_{t\in R}|\langle\phi_{r+t},\phi_t\rangle|\right)=\mathrm{D}\textrm{-}\lim_{r\in R}\left(\mathrm{D}\textrm{-}\lim_{t\in R}\left|\int_X\phi T_1^r\phi d\mu\right|\right)=0.
\end{gather*}
Thus $\mathrm{D}\textrm{-}\lim_{t\in R}\{T_1^tf_1-T_1^tE_\mu(f_1|\pi^{-1}[\mathscr{Y}])\}=\mathrm{D}\textrm{-}\lim_{t\in R}\phi_t=0$ weakly
by Lemma~\ref{lem1.4}.

Next, we assume $T_1,\dotsc,T_l$ are distinct elements in $G$ with $T_i\not=e$ for $1\le i\le l$ and $l\ge2$.
By applying to the foregoing expression (\ref{eq3.1}) the telescoping sum identity
\begin{equation}\label{eq3.2}
\prod_{i=1}^la_i-\prod_{i=1}^lb_i=\sum_{j=1}^l\left(\prod_{i=1}^{j-1}a_i\right)(a_j-b_j)\left(\prod_{i=j+1}^{l}b_i\right)
\end{equation}
where and in the sequel an empty product is always interpreted as $1$, we reduce our proof of the above (\ref{eq3.1}) to the proof of
\begin{equation*}
\textit{weakly }\textrm{D-}\lim_{t\in R}{\prod}_{i=1}^lT_i^tf_i=0,\quad \forall f_1,\dotsc,f_l\in\mathfrak{L}^\infty(X,\mathscr{X}^\prime,\mu)
\end{equation*}
under the additional hypothesis that for some $1\le j\le l$, $E_\mu(f_j|\pi^{-1}[\mathscr{Y}])=0$ since we have
$$E_\mu\left(f_j-E_\mu(f_j|\pi^{-1}[\mathscr{Y}])\right)=0.$$
Without loss of generality assume $E_\mu(f_l|\pi^{-1}[\mathscr{Y}])=0$, and the van der Corput-type lemma (Lemma~\ref{lem1.4}) may be employed as follows. Set
\begin{equation*}
\psi_t=\prod_{i=1}^lT_i^tf_i\in\mathfrak{L}^2(X,\mathscr{X}^\prime,\mu)\quad \forall t\in R.
\end{equation*}
Then for any $r,t\in R$, we can get that
\begin{align*}
\langle \psi_{r+t},\psi_t\rangle&=\int_X{\prod}_{i=1}^lT_i^{r+t}f_i{\prod}_{i=1}^lT_i^tf_id\mu=\int_X(f_lT_l^rf_l){\prod}_{i=1}^{l-1}(T_iT_l^{-1})^t(f_iT_i^rf_i)d\mu.
\end{align*}
Since $T_1,T_2,\dotsc,T_l$ are distinct, we can apply the induction hypothesis to obtain
$$
\mathrm{D}\textrm{-}\lim_{t\in R}\left\{\langle \psi_{r+t},\psi_t\rangle-\int_X(f_lT_l^rf_l)\prod_{i=1}^{l-1}(T_iT_l^{-1})^tE_\mu(f_iT_i^rf_i|\pi^{-1}[\mathscr{Y}])d\mu\right\}=0
$$
so that
$$
\mathrm{D}\textrm{-}\limsup_{t\in R}|\langle \psi_{r+t},\psi_t\rangle|=\mathrm{D}\textrm{-}\limsup_{t\in R}\left|\int_X(f_lT_l^rf_l)\prod_{i=1}^{l-1}(T_iT_l^{-1})^tE_\mu(f_iT_i^rf_i|\pi^{-1}[\mathscr{Y}])d\mu\right|.
$$
Then by the self-adjointness of conditional expectation (cf.~\cite[Theorem~6.1(vi)]{Kal}), we have
\begin{align*}
&\int_X(f_lT_l^rf_l)\prod_{i=1}^{l-1}(T_iT_l^{-1})^tE_\mu(f_iT_i^rf_i|\pi^{-1}[\mathscr{Y}])d\mu\\
&=\int_XE_\mu(f_lT_l^rf_l|\pi^{-1}[\mathscr{Y}])\prod_{i=1}^{l-1}(T_iT_l^{-1})^tE_\mu(f_iT_i^rf_i|\pi^{-1}[\mathscr{Y}])d\mu.
\end{align*}
Since $\big{\|}{\prod}_{i=1}^{l-1}T_i^tT_l^{-t}(f_iT_i^rf_i)\big{\|}_\infty\le C$ for some constant $C>0$, then by the H\"{o}lder inequality it follows that
$$
\left|\int_X(f_lT_l^rf_l)\prod_{i=1}^{l-1}(T_iT_l^{-1})^tE_\mu(f_iT_i^rf_i|\pi^{-1}[\mathscr{Y}])d\mu\right|\le C\|E_\mu(f_lT_l^rf_l|\Y)\|_{2,\nu}
$$
so that
\begin{gather*}
\mathrm{D}\textrm{-}\limsup_{t\in R}|\langle \psi_{r+t},\psi_t\rangle|\le C\|E_\mu(f_lT_l^rf_l|\Y)\|_{2,\nu}\quad \forall r\in R\intertext{and further by Lemma~\ref{lem3.6}}
\mathrm{D}\textit{-}\lim_{r\in R}\left(\mathrm{D}\textit{-}\limsup_{t\in R}|\langle\psi_{r+t},\psi_t\rangle|\right)\le \mathrm{D}\textit{-}\lim_{r\in R}C\|E_\mu(f_lT_l^rf_l|\Y)\|_{2,\nu}=0.
\end{gather*}
Hence Lemma~\ref{lem1.4} follows that $\textrm{D-}\lim_{t\in R}\psi_t=0$ over~$\{F_n\}_1^\infty$ in the sense of the weak topology of $\mathfrak{L}^2(X,\mathscr{X}^\prime,\mu)$.

The proof of Proposition~\ref{prop3.7} is thus completed.
\end{proof}

The following result is a generalization of the classical $\mathbb{Z}$-module case (cf.~\cite[Theorem~2.3]{F77}, \cite[Theorem~4.10]{Fur} and \cite[Theorem~4.5]{B} by using different induction approaches).

\begin{cor}\label{cor3.8}
Let $\X^\prime=(X,\mathscr{X}^\prime,\mu,G)$ be totally weak-mixing. Then for any distinct elements $T_1,\dotsc,T_l$ in $G$ with $T_i\not=e$ for $1\le i\le l$ and any $f_1,\dotsc,f_l$ in $\mathfrak{L}^\infty(X,\mathscr{X}^\prime,\mu)$ we have
$$
\textrm{$\mathrm{D}$-}\lim_{t\in R}\prod_{i=1}^lT_i^tf_i=\left(\int_Xf_1d\mu\right)\dotsm\left(\int_Xf_ld\mu\right)\quad \textrm{weakly in } \mathfrak{L}^2(X,\mathscr{X},\mu)
$$
over any weak F{\o}lner sequence in $(R,+)$.
\end{cor}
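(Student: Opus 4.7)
The plan is to deduce this immediately from Proposition~\ref{prop3.7} by specializing to the trivial factor. Let $\Y=(Y,\mathscr{Y},\nu,G)$ be the one-point $G$-system and let $\pi\colon\X^\prime\rightarrow\Y$ be the (unique) factor map to this trivial system. Since $\X^\prime$ is assumed totally weak-mixing, every $g\in G$ with $g\neq e$ acts in a weak-mixing manner on $(X,\mathscr{X}^\prime,\mu)$, and this translates exactly into the assertion that $\pi\colon\X^\prime\rightarrow\Y$ is relatively weak-mixing for every such $g$ in the sense of Def.~\ref{def3.1}(a); that is, $\pi$ is totally relatively weak-mixing for $\varGamma=G$ in the sense of Def.~\ref{def3.1}(b). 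Hence the hypotheses of Proposition~\ref{prop3.7} are in force.

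Next I would note that since $\mathscr{Y}=\{\varnothing,Y\}$ ($\nu$-mod $0$), the pulled-back $\sigma$-algebra $\pi^{-1}[\mathscr{Y}]$ is the trivial $\sigma$-algebra on $X$, so for each $f_i\in\mathfrak{L}^\infty(X,\mathscr{X}^\prime,\mu)$ the conditional expectation collapses to the constant
$$
E_\mu(f_i\,|\,\pi^{-1}[\mathscr{Y}])\equiv\int_Xf_i\,d\mu\quad (\textit{a.e.}).
$$
Because every $T_i^t$ is $\mu$-preserving and fixes constants, the product on the right-hand side of the displayed formula in Proposition~\ref{prop3.7} simplifies:
$$
\prod_{i=1}^lT_i^tE_\mu(f_i\,|\,\pi^{-1}[\mathscr{Y}])=\prod_{i=1}^l\int_Xf_i\,d\mu\quad \forall t\in R.
$$

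Plugging these two observations into the conclusion of Proposition~\ref{prop3.7} yields, over any weak F{\o}lner sequence $\{F_n\}_1^\infty$ in $(R,+)$,
$$
\textrm{D-}\lim_{t\in R}\left\{\prod_{i=1}^lT_i^tf_i-\prod_{i=1}^l\int_Xf_i\,d\mu\right\}=0\quad \textit{weakly in }\mathfrak{L}^2(X,\mathscr{X}^\prime,\mu),
$$
which is precisely the claim of Corollary~\ref{cor3.8}. The only genuinely non-trivial step is verifying that ``totally weak-mixing for $\X^\prime$'' really matches ``totally relatively weak-mixing for $G$ over the trivial factor'' in Def.~\ref{def3.1}, but this is built into the way relatively weak-mixing extensions generalize the absolute notion (every $\langle g\rangle_R$-invariant element of $\mathfrak{L}^2(\mu)\otimes_{\Y}\mathfrak{L}^2(\mu)=\mathfrak{L}^2(\mu\otimes\mu)$ being a function on $\Y$ is exactly the statement that the product system $\X^\prime\times\X^\prime$ is $\langle g\rangle_R$-ergodic, i.e., $g$ is weak-mixing on $\X^\prime$). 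So no further work beyond a direct invocation of Proposition~\ref{prop3.7} is required.
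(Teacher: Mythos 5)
Your proposal is correct and follows exactly the route the paper takes: the paper's proof is the one-line observation that $\X^\prime$ is a totally relatively weak-mixing extension of the one-point system, so Proposition~\ref{prop3.7} applies. You simply spell out the (routine) details that the paper leaves implicit, namely that $\pi^{-1}[\mathscr{Y}]$ is trivial so the conditional expectations reduce to the integrals $\int_X f_i\,d\mu$.
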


\begin{proof}
Since $\X$ is a totally relatively weak-mixing extension of the one-point system for $G$, hence the statement follows from Proposition~\ref{prop3.7}.
\end{proof}

As a consequence of Proposition~\ref{prop3.7} we have the following, which is a generalization of \cite[Proposition~7.7]{Fur}.

\begin{Lem}\label{lem3.9}
If $\pi\colon\X^\prime\rightarrow\Y$ is totally relatively weak-mixing for $G$ and $T_1,\dotsc,T_l\in G$ are distinct with $T_i\not=e$ for $1\le i\le l$, then
for any $f_0\in\mathfrak{L}^2(X,\mathscr{X}^\prime,\mu)$ and $f_1,\dotsc,f_l\in\mathfrak{L}^\infty(X,\mathscr{X}^\prime,\mu)$,
\begin{equation*}
\textrm{$\mathrm{D}$-}\lim_{t\in R}\left\{\int_Xf_0{\prod}_{i=1}^lT_i^tf_i\,d\mu-\int_YE_\mu(f_0|\Y){\prod}_{i=1}^lT_i^tE_\mu(f_i|\Y)\,d\nu\right\}=0,
\end{equation*}
over any weak F{\o}lner sequence in $(R,+)$.
\end{Lem}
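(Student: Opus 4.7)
The plan is to read off Lemma~\ref{lem3.9} as a testing of the weak $\mathfrak{L}^2$-density convergence already established in Proposition~\ref{prop3.7} against $f_0$, followed by a change of integration from $X$ down to $Y$ via the factor map $\pi$. No new analytic ingredient should be needed; all the work was done in proving Proposition~\ref{prop3.7}.

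First, under the hypotheses of the lemma (which are identical to those of Proposition~\ref{prop3.7}), that proposition yields
$$
\mathrm{D}\textrm{-}\lim_{t\in R}\left\{\prod_{i=1}^l T_i^t f_i-\prod_{i=1}^l T_i^t E_\mu(f_i|\pi^{-1}[\mathscr{Y}])\right\}=0
$$
\emph{weakly} in $\mathfrak{L}^2(X,\mathscr{X}^\prime,\mu)$, over any weak F{\o}lner sequence $\{F_n\}_1^\infty$ in $(R,+)$. Since $f_0\in\mathfrak{L}^2(X,\mathscr{X}^\prime,\mu)$, pairing each side with $f_0$ in the $\mathfrak{L}^2$-inner product immediately gives
$$
\mathrm{D}\textrm{-}\lim_{t\in R}\left\{\int_X f_0\prod_{i=1}^l T_i^t f_i\,d\mu-\int_X f_0\prod_{i=1}^l T_i^t E_\mu(f_i|\pi^{-1}[\mathscr{Y}])\,d\mu\right\}=0.
$$

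Second, I would push the second integral down to $Y$. For each $i$, write $\tilde f_i:=E_\mu(f_i|\Y)\in\mathfrak{L}^\infty(Y,\mathscr{Y},\nu)$, so that $E_\mu(f_i|\pi^{-1}[\mathscr{Y}])=\tilde f_i\circ\pi$. Because $\pi\colon\X^\prime\rightarrow\Y$ is a $G$-factor map, the $G$-action on $X$ projects to a $G$-action on $Y$ and the lift commutes with every $T_i^t$: namely $T_i^t(\tilde f_i\circ\pi)=(T_i^t\tilde f_i)\circ\pi$. Consequently the entire product $\prod_{i=1}^l T_i^t E_\mu(f_i|\pi^{-1}[\mathscr{Y}])$ is $\pi^{-1}[\mathscr{Y}]$-measurable. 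Using the self-adjointness of the conditional-expectation operator (cf.\ \cite[Theorem~6.1(vi)]{Kal}) and $\pi_*\mu=\nu$,
$$
\int_X f_0\prod_{i=1}^l T_i^t E_\mu(f_i|\pi^{-1}[\mathscr{Y}])\,d\mu=\int_X E_\mu(f_0|\pi^{-1}[\mathscr{Y}])\prod_{i=1}^l T_i^t E_\mu(f_i|\pi^{-1}[\mathscr{Y}])\,d\mu =\int_Y E_\mu(f_0|\Y)\prod_{i=1}^l T_i^t E_\mu(f_i|\Y)\,d\nu.
$$
Combining the two displayed identities yields the desired density limit.

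There is no genuine obstacle beyond Proposition~\ref{prop3.7} itself; the remaining work is the routine bookkeeping that (i) the $G$-action on $X$ commutes with the lift along $\pi$ so the conditional expectations of the $f_i$ transform naturally, and (ii) integrals of $\pi^{-1}[\mathscr{Y}]$-measurable functions against $f_0$ under $\mu$ become integrals against $E_\mu(f_0|\Y)$ under $\nu$.
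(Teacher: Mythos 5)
Your proposal is correct and follows essentially the same route as the paper: both reduce the claim to Proposition~\ref{prop3.7} by pairing the weak density limit with $f_0$, and both use the self-adjointness of $E_\mu(\,\cdot\,|\pi^{-1}[\mathscr{Y}])$ together with the $G$-equivariance of $\pi$ to identify the integral over $Y$ with the corresponding integral over $X$. The only difference is the order of presentation (the paper rewrites the target first and then reduces; you run the argument forward), which is immaterial.
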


\begin{proof}
Over any weak F{\o}lner sequence in $(R,+)$, we rewrite this as
\begin{equation*}
\textrm{D-}\lim_{t\in R}\left\{\int_Xf_0{\prod}_{i=1}^lT_i^tf_i\, d\mu-\int_XE_\mu(f_0|\pi^{-1}[\mathscr{Y}]){\prod}_{i=1}^lT_i^tE_\mu(f_i|\pi^{-1}[\mathscr{Y}])\,d\mu\right\}=0
\end{equation*}
and noting that by the self-adjointness of the conditional expectation operator $E_\mu(\centerdot|\pi^{-1}[\mathscr{Y}])$
$$
\int_XE_\mu(f_0|\pi^{-1}[\mathscr{Y}]){\prod}_{i=1}^lT_i^tE_\mu(f_i|\pi^{-1}[\mathscr{Y}])\,d\mu=\int_Xf_0{\prod}_{i=1}^lT_i^tE_\mu(f_i|\pi^{-1}[\mathscr{Y}])\,d\mu,
$$
whence we only need to prove that
$$
\textrm{D-}\lim_{t\in R}\left\langle f_0,\prod_{i=1}^lT_i^tf_i-\prod_{i=1}^lT_i^tE_\mu(f_i|\pi^{-1}[\mathscr{Y}])\right\rangle=0.
$$
However, this follows at once from Proposition~\ref{prop3.7}.
\end{proof}

The following is a simple consequence of Lemma~\ref{lem3.9}.

\begin{prop}\label{prop3.10}
Let $\pi\colon\X^\prime\rightarrow\Y$ be totally relatively weak-mixing for $G$ and let $\Y$ be a \textit{Kh}-system; then $\X^\prime$ is also a \textit{Kh}-system.
\end{prop}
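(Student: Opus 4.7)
The plan is to reduce the \textit{Kh}-problem on $\X^\prime$ to the \textit{Kh}-problem on $\Y$ by replacing each $f_i$ with its conditional expectation onto $\pi^{-1}[\mathscr{Y}]$, and then to control the error via Lemma~\ref{lem3.9}. Fix $f\in\mathfrak{L}^\infty(X,\mathscr{X}^\prime,\mu)$ with $f\geq 0$ and $\int_X f\,d\mu>0$, fix $T_1,\dotsc,T_l\in G$ with $l\ge 2$, fix $\varepsilon>0$, and fix an arbitrary weak F{\o}lner sequence $\{F_n\}_1^\infty$ in $(R,+)$. After a preliminary grouping step, we may assume the $T_i$ are pairwise distinct and $T_i\not=e$ for all $i$; the degenerate cases (some $T_i=e$, or coincidences among them) lead either to $t$-independent integrals that trivially satisfy the $\mathit{Kh}$-inequality for any $\varepsilon>0$, or collapse to fewer distinct translates of powers of $f$ that can be absorbed into a larger $f_0$ in Lemma~\ref{lem3.9}.

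Set $\bar{f}=E_\mu(f|\pi^{-1}[\mathscr{Y}])\in\mathfrak{L}^\infty(Y,\mathscr{Y},\nu)$, so that $\bar{f}\geq 0$ $\nu$-a.e.\ and $\int_Y\bar{f}\,d\nu=\int_X f\,d\mu>0$. Applying Lemma~\ref{lem3.9} with $f_0\equiv 1$ and $f_1=\dotsm=f_l=f$ yields
\begin{equation*}
\mathrm{D}\textrm{-}\lim_{t\in R}\left\{\int_X\prod_{i=1}^l T_i^t f\,d\mu-\int_Y\prod_{i=1}^l T_i^t\bar{f}\,d\nu\right\}=0
\end{equation*}
over $\{F_n\}_1^\infty$. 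Consequently the set
\begin{equation*}
A=\left\{t\in R\,\Big{|}\,\left|\int_X\prod_{i=1}^l T_i^t f\,d\mu-\int_Y\prod_{i=1}^l T_i^t\bar{f}\,d\nu\right|<\frac{\varepsilon}{2}\right\}
\end{equation*}
has $\{F_n\}_1^\infty$-density $1$ by Definition~\ref{def1.1}.

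On the other hand, since $\Y$ is a \textit{Kh}-system and $\int_Y\bar{f}\,d\nu=\int_X f\,d\mu$, the set
\begin{equation*}
B=\left\{t\in R\,\Big{|}\,\int_Y\prod_{i=1}^l T_i^t\bar{f}\,d\nu>\left(\int_X f\,d\mu\right)^l-\frac{\varepsilon}{2}\right\}
\end{equation*}
has positive lower density over $\{F_n\}_1^\infty$. For every $t\in A\cap B$ the triangle inequality gives $\int_X\prod_{i=1}^l T_i^t f\,d\mu>(\int_X f\,d\mu)^l-\varepsilon$, and because $A$ has density $1$ while $B$ has positive lower density, their intersection $A\cap B$ also has positive lower density over $\{F_n\}_1^\infty$. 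As $\{F_n\}_1^\infty$ was arbitrary, the returning-time set appearing in Definition~\ref{def2.1} has positive lower Banach density, proving that $\X^\prime$ is a \textit{Kh}-system.

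The only real subtlety is the preliminary reduction to distinct nonidentity $T_i$'s, which is needed to fit the hypotheses of Lemma~\ref{lem3.9}; everything else is a direct consequence of that lemma combined with the elementary fact that the intersection of a set of density $1$ with a set of positive lower density again has positive lower density.
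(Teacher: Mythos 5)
Your argument is correct and is exactly the derivation the paper intends: Proposition~\ref{prop3.10} is stated there as an immediate consequence of Lemma~\ref{lem3.9} with no written proof, and your scheme of intersecting the density-$1$ set supplied by Lemma~\ref{lem3.9} (with $f_0\equiv1$, $f_1=\dotsb=f_l=f$) with the positive-lower-density set supplied by the \textit{Kh}-property of $\Y$ is the standard way to make that precise. The only point worth tightening is your preliminary reduction: for repeated non-identity $T_i$'s you group them into $T_i^t(f^{m_i})$ and must then pass from $E_\mu(f^{m_i}\mid\Y)$ down to $E_\mu(f\mid\Y)^{m_i}$ via the conditional Jensen inequality (using nonnegativity of the remaining factors) before invoking the \textit{Kh}-property of $\Y$ for the original, possibly repeated, tuple --- ``absorbing into a larger $f_0$'' only handles the factors with $T_i=e$.
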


Finally we remark that for a jointly relatively weak-mixing extension $\pi\colon\X^\prime\rightarrow\Y$ (cf.~Def.~\ref{def3.1}-$(d)$) there exists no such an useful theory in the literature.
\section{\textit{Kh}-property of chaotic $G$-systems}\label{sec4}
In this section, let $G$ be an lcsc $R$-module and write the continuous scalar multiplication as
$(t,T)\mapsto T^t$
from $R\times G$ to $G$. Let $\X^\prime=(X,\mathscr{X}^\prime,\mu,G)$ be a (not necessarily standard) Borel $G$-system.
\subsection{Compact $G$-system}\label{sec4.1}
Different with to say that $X$ is a compact space or $\X^\prime$ is relatively compact, by a \textit{compact $G$-space} $(X,\mathscr{X}^\prime,\mu)$ or a \textit{compact $G$-system} $\X^\prime=(X,\mathscr{X}^\prime,\mu, G)$, we mean that for each $f$ in $\mathfrak{L}^2(X,\mathscr{X}^\prime,\mu)$, the $G$-orbit $G[f]=\{Tf\,|\,T\in G\}$ is precompact/totally-bounded in the Hilbert space $\mathfrak{L}^2(X,\mathscr{X}^\prime,\mu)$.
\subsection{Chaotic $G$-system}\label{sec4.2}
We shall say $\X^\prime=(X,\mathscr{X}^\prime,\mu,G)$ is \textbf{\textit{chaotic}} if there exists a direct product $G=G_c\times G_{w}$ of two nontrivial $R$-submodules of $G$ such that $\X_c^\prime=(X,\mathscr{X}^\prime,\mu,G_c)$ is a compact $G_c$-system and $\X_{w}^\prime=(X,\mathscr{X}^\prime,\mu,G_{w})$ is a totally weak-mixing $G_{w}$-system. This is stronger than a primitive system in the Furstenberg factors chain of $\X$.

The following result will play an important role in proving Theorem~\ref{Kh}.

\begin{prop}\label{prop4.1}
If $\X^\prime=(X,\mathscr{X}^\prime,\mu,G)$ is chaotic, then it is a \textit{Kh}-system such that for any $f$ in $\mathfrak{L}^\infty(X,\mathscr{X}^\prime,\mu)$ with $f\ge0$ \textit{a.e.} and $\int_Xfd\mu>0$ and for any $\varepsilon>0$,
\begin{gather}\label{eq4.1}
\liminf_{n\to+\infty}\frac{1}{|F_n|}\int_{F_n}\left(\int_XT_1^tf\dotsm T_l^tf\,d\mu\right)dt>\left(\int_Xfd\mu\right)^l-\varepsilon
\end{gather}
over any weak F{\o}lner sequence $\{F_n\}_1^\infty$ in $(R,+)$, for any $T_1,\dotsc,T_l\in G$, $l\ge2$.
\end{prop}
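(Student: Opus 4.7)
The approach is to decompose each $T_i = (A_i, B_i) \in G_c \times G_w$ and combine the compactness of the $G_c$-dynamics (to produce simultaneous approximate return times) with the total weak-mixing of $G_w$ (to decorrelate via Corollary~\ref{cor3.8}). After reducing to the case $0 \le f \le 1$ with $c := \int_X f\,d\mu > 0$, and using that $G_c$, $G_w$ commute so $T_i^t = A_i^t B_i^t = B_i^t A_i^t$, I proceed in two steps.

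\emph{Step A (simultaneous compact return times).} Since $\X'_c$ is compact, each orbit $\{A_i^t f\}_{t \in R}$ is precompact in $\mathfrak{L}^2(X,\mathscr{X}',\mu)$, and hence so is the joint orbit $\{(A_1^t f, \ldots, A_l^t f): t \in R\}$ in the product Hilbert space $[\mathfrak{L}^2(X,\mathscr{X}',\mu)]^l$. The $R$-action on this compact closure factors through a continuous homomorphism into a compact abelian group $\bar R$ (the Bohr-type compactification of $R$ induced by the joint unitary representation), equipped with Haar measure $\lambda$. For any $\delta > 0$, the simultaneous return set
\[
N(\delta) := \bigl\{t \in R : \|A_i^t f - f\|_2 < \delta,\ 1\le i \le l\bigr\}
\]
corresponds to an open neighborhood of the base point in $\bar R$, so has $\lambda$-measure $\alpha(\delta) > 0$. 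The $\mathfrak{L}^2$-mean ergodic theorem for the $\bar R$-rotation (applied to the almost-periodic indicator of this neighborhood along any weak F{\o}lner sequence $\{F_n\}_1^\infty$ in $(R,+)$) then shows that $N(\delta)$ has lower density at least $\alpha(\delta)$ over $\{F_n\}$.

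\emph{Step B (weak-mixing decorrelation).} Group the indices by their $B$-coordinate: let $\tilde B_1,\ldots,\tilde B_m$ be the distinct values in $\{B_1,\ldots,B_l\}$ with multiplicities $k_1,\ldots,k_m$ summing to $l$. For $t \in N(\delta)$, telescoping with $\|A_i^t f - f\|_2 < \delta$, $\|f\|_\infty \le 1$, and the commutativity of $A_i^t$ with $B_j^t$ gives
\[
\int_X \prod_{i=1}^l T_i^t f\, d\mu \;\ge\; \int_X \prod_{j=1}^m \tilde B_j^t(f^{k_j})\, d\mu - C\delta
\]
for a constant $C = C(l)$. Applying Corollary~\ref{cor3.8} to the totally weak-mixing $G_w$-system $\X'_w$ with the distinct non-identity $\tilde B_j$'s (and treating any $\tilde B_j = e_w$ factor $f^{k_j}$ as a $t$-independent test function against the weakly $\mathfrak{L}^2$-convergent product) yields
\[
\mathrm{D}\text{-}\lim_{t \in R}\int_X \prod_j \tilde B_j^t(f^{k_j})\, d\mu \;=\; \prod_{j=1}^m \int_X f^{k_j}\, d\mu \;\ge\; c^l
\]
over $\{F_n\}_1^\infty$, with the final inequality by Jensen ($\int f^{k_j} \ge c^{k_j}$). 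Hence $M(\eta) := \bigl\{t : \bigl|\int_X \prod_j \tilde B_j^t(f^{k_j})\,d\mu - \prod_j \int_X f^{k_j}\,d\mu\bigr| < \eta\bigr\}$ has density one over $\{F_n\}$.

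Combining Steps A and B: for $\delta, \eta$ with $C\delta + \eta < \varepsilon$, the set $N(\delta) \cap M(\eta)$ has lower density at least $\alpha(\delta) > 0$ over $\{F_n\}$, and on it $\int_X \prod_i T_i^t f\,d\mu > c^l - \varepsilon$, which establishes the Kh-property of Definition~\ref{def2.1}. To promote this to the Ces\`aro-averaged inequality \eqref{eq4.1}, I would further invoke the spectral decomposition of the integrand $t \mapsto \int_X \prod T_i^t f\,d\mu$ into $\bar R$-almost-periodic pieces multiplied by density-convergent $G_w$-correlations (Corollary~\ref{cor3.8} again), so that the Ces\`aro mean of the integrand converges to $\prod_j \int_X f^{k_j}\,d\mu \ge c^l$. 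The main obstacle is the uniform positive lower density of $N(\delta)$ over arbitrary weak F{\o}lner sequences---this rests on the $\mathfrak{L}^2$-mean ergodic theorem for the Bohr-type compactification of $R$, together with the careful grouping of coincident $B_i$'s when applying Corollary~\ref{cor3.8}.
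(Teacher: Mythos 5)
Your overall strategy is the same as the paper's: write $T_i=S_iH_i$ with $S_i\in G_w$, $H_i\in G_c$, use Corollary~\ref{cor3.8} plus Jensen to make the weak-mixing correlations exceed $(\int f)^l-\varepsilon/2$ on a density-one set, produce a positive-lower-density set of simultaneous $\mathfrak{L}^2$-return times for the compact coordinates, and intersect. Two points of divergence are worth recording. First, your Step~B is actually \emph{more} careful than the paper: you group coincident $\tilde B_j$'s and treat identity components separately before invoking Corollary~\ref{cor3.8}, whereas the paper applies the corollary to $S_1,\dotsc,S_l$ directly and lets ``Jensen's inequality'' silently absorb the degenerate cases; your bookkeeping is the honest version of that step. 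Second, for Step~A the paper does not pass to a Bohr-type compactification: it uses precompactness of the joint orbit $\{(H_1^tf,\dotsc,H_l^tf)\}$ to extract a finite $\tfrac{\varepsilon}{2^{l+1}}$-net $t_1,\dotsc,t_k$, concludes $\mathcal{H}+\{t_1,\dotsc,t_k\}=R$ (strong syndeticity), and then gets lower density $\ge 1/k$ over any weak F{\o}lner sequence directly from translation-invariance of density. Your route has a soft spot exactly where you lean on it: the ``mean ergodic theorem applied to the almost-periodic indicator'' is not quite legitimate, since $1_U$ is not continuous on $\bar R$ and $\mathfrak{L}^2$-convergence of averages does not let you evaluate at the base point. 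This is repairable (replace $1_U$ by a continuous minorant and use unique ergodicity of the minimal rotation), but the finite-net covering argument is shorter and avoids the issue entirely; I would adopt it.

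One further caution: what both your argument and this intersection scheme actually deliver is the \textit{Kh}-property --- the set where $\int_X T_1^tf\dotsm T_l^tf\,d\mu>(\int_Xf\,d\mu)^l-\varepsilon$ has positive lower density over every weak F{\o}lner sequence --- since the integrand being large on a set of lower density $\alpha$ only bounds the Ces\`aro average in \eqref{eq4.1} below by $\alpha\bigl((\int_Xf\,d\mu)^l-\varepsilon\bigr)$, not by $(\int_Xf\,d\mu)^l-\varepsilon$. Your concluding ``spectral decomposition'' sketch for promoting this to \eqref{eq4.1} is the one genuinely unfinished step of your write-up; note, though, that only the positive-lower-Banach-density statement is used in the proof of Theorem~\ref{Kh}.
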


\begin{proof}
Let $f\in\mathfrak{L}^\infty(X,\mathscr{X}^\prime,\mu)$ be such that $f\ge0$ \textit{a.e.} and $\int_Xfd\mu>0$. Given any weak F{\o}lner sequence $\{F_n\}_1^\infty$ in $(R,+)$ and any
$T_1,\dotsc,T_l\in G$ where $l\ge2$. Since $G=G_c\times G_{w}$, we may write
\begin{gather*}
T_i=S_iH_i,\quad S_i\in G_{w},\ H_i\in G_c,\quad i=1,\dotsc,l.
\end{gather*}
Moreover, there is no loss of generality in assuming
$$
0\le f(x)\le\frac{1}{2}\quad (\mu\textit{-a.e.}).
$$
As $\X_{w}^\prime=(X,\mathscr{X}^\prime,\mu,G_{w})$ is totally weak-mixing, it follows from Corollary~\ref{cor3.8} and Jensen's inequality that
\begin{gather*}
\mathrm{D}\textit{-}\lim_{t\in R}\int_XS_1^tf\dotsm S_l^tf\,d\mu>\left(\int_Xfd\mu\right)^l-\frac{\varepsilon}{2}.
\end{gather*}
Thus, the set
$$
\mathcal{S}=\left\{t\in R\,\big{|}\int_XS_1^tf\dotsm S_l^tf\,d\mu>\left(\int_Xfd\mu\right)^l-\frac{\varepsilon}{2}\right\}
$$
is of density $1$ over $\{F_n\}_1^\infty$.

On the other hand, since $\X_c^\prime=(X,\mathscr{X}^\prime,\mu,G_c)$ is a compact $G_c$-system, it follows that for arbitrary $\varepsilon>0$,
$$
\mathcal{H}=\left\{t\in R\,\big{|}\,\max_{1\le i\le l}\|H_i^tf-f\|_2<\frac{\varepsilon}{2^{l+1}}\right\}
$$
is of positive lower density over $\{F_n\}_1^\infty$. Indeed, since $f(x)$ is almost periodic for $G_c$, the orbits $G_c[f]$ is precompact in $\mathfrak{L}^2(X,\mathscr{X}^\prime,\mu)$. Hence the partial orbits $\langle H_i\rangle_R[f]$, for $1\le i\le l$, all are also precompact in $\mathfrak{L}^2(X,\mathscr{X}^\prime,\mu)$.
Write
$$W=\stackrel{l\textit{-times}}{\overbrace{\mathfrak{L}^2(X,\mathscr{X}^\prime,\mu)\times\dotsm\times\mathfrak{L}^2(X,\mathscr{X}^\prime,\mu)}}.$$
By considering the diagonal $R$-action
$\varphi\colon R\times W\rightarrow W$
given by
\begin{gather*}
(t,(\phi_1,\dotsc,\phi_l))\mapsto(H_1^t\phi_1,\dotsc,H_l^t\phi_l),
\end{gather*}
and using Tychonoff's Theorem it follows that the $l$-tuple partial orbit of the point $(f,\dotsc,f)$
$$
R_\varphi[(f,\dots,f)]=\left\{(H_1^tf,\dotsc,H_l^tf)\in W\,|\, t\in R\right\}
$$
is precompact in $W$ and thus one can find a finite set of points, say $t_1, \dotsc, t_k$, in $R$ such that
$$
\min_{1\le j\le k}\max_{1\le i\le l}\|H_i^tf-H_i^{t_j}f\|_2<\frac{\varepsilon}{2^{l+1}}\quad \forall t\in R
$$
so that the Borel set
\begin{equation}\label{eq4.2}
\mathcal{H}=\left\{t\in R\,\big{|}\, \max_{1\le i\le l}\|H_i^tf-f\|_2<\frac{\varepsilon}{2^{l+1}}\right\}
\end{equation}
is \textbf{\textit{strongly syndetic}} in $(R,+)$ in the sense that $\mathcal{H}+\{t_1,\dotsc,t_k\}=R$.
This implies that $\mathcal{H}$ has positive lower density over $\{F_n\}_1^\infty$ in $(R,+)$; this is because otherwise, there would be a subsequence of $\{n\}$, say $n_\ell\to\infty$, such that
\begin{align*}
0&=\lim_{\ell\to\infty}\frac{|\mathcal{H}\cap F_{n_\ell}|}{|F_{n_\ell}|}=\lim_{\ell\to\infty}\frac{|\mathcal{H}\cap(F_{n_\ell}-t_j)|}{|F_{n_\ell}|}
=\lim_{\ell\to\infty}\frac{|(\mathcal{H}+t_j)\cap F_{n_\ell}|}{|F_{n_\ell}|}\quad (1\le j\le k)\\
&=\lim_{\ell\to\infty}\frac{|(\mathcal{H}+K)\cap F_{n_\ell}|}{|F_{n_\ell}|}\quad(\textrm{where }K=\{t_1,\dotsc,t_k\})\\
&=1
\end{align*}
which is a contradiction to $\mathcal{H}+\{t_1,\dotsc,t_k\}=R$.

Then for $1\le i\le l$, we can express $H_i^tf$ as
$$
H_i^tf(x)=f(x)+\psi_{i,t}(x)\ \forall t\in\mathcal{H},\quad \textit{where }|\psi_{i,t}(x)|\le 1,\ \|\psi_{i,t}\|_2<\frac{\varepsilon}{2^{l+1}}.
$$
Next we define $P=\mathcal{S}\cap\mathcal{H}$ which is of positive lower density over $\{F_n\}_1^\infty$. Then for any $t\in P$,
\begin{equation*}\begin{split}
\int_XT_1^tf\dotsm T_l^tf\,d\mu&=\int_XS_1^t(H_1^tf)\dotsm S_l^t(H_l^tf)\,d\mu\\
&\ge\int_XS_1^tf\dotsm S_l^tf\,d\mu-\frac{\varepsilon}{2}\\
&>\left(\int_Xfd\mu\right)^l-\varepsilon.
\end{split}\end{equation*}
This implies (\ref{eq4.1}) for $\varepsilon>0$ is arbitrary.

This proves Proposition~\ref{prop4.1}.
\end{proof}

\subsection{Proof of Theorem~\ref{Kh}}
Let $G$ be an lcscN $R$-module over a syndetic ring $(R,+,\cdot)$ and let $(X,\mathscr{X},\mu)$ be a nontrivial standard Borel $G$-space.
Let $\X_1=(X,\mathscr{X}_1,\mu,G)$ be the first intermediate factor in the Furstenberg factors chain of $\X=(X,\mathscr{X},\mu,G)$ by Theorem~\ref{thm0.2}.
Then by Proposition~\ref{prop4.1}, it follows that for any
$A\in\mathscr{X}_1$ with $\mu(A)>0$, any $g_1,\dotsc,g_l\in G$ with any $l\ge2$ and any $\varepsilon>0$, the set
\begin{gather*}
N_{g_1,\dotsc,g_l}(A,\varepsilon)=\left\{t\in R\,\big{|}\,\int_Xg_1^t1_A\dotsm g_l^t1_A\,d\mu>\mu(A)^l-\varepsilon\right\},
\end{gather*}
is of positive lower Banach density in $(R,+)$; i.e.,
\begin{gather}\label{eq4.3}
\liminf_{n\to+\infty}\frac{|F_n\cap N_{g_1,\dotsc,g_l}(A,\varepsilon)|}{|F_n|}>0
\end{gather}
over any weak F{\o}lner sequence $\{F_n\}_{1}^\infty$ in $(R,+)$.

We now only need to show that $N_{g_1,\dotsc,g_l}(A,\varepsilon)$ is syndetic in $(R,+)$; that is, there is a compact subset $K$ of $R$ with $N_{g_1,\dotsc,g_l}(A,\varepsilon)+K=R$. To the contrary, for any weak F{\o}lner sequence $\{K_n\}_{1}^\infty$ in $(R,+)$, there exists a sequence of elements $t_n\in R$ so that $N_{g_1,\dotsc,g_l}(A,\varepsilon)\cap(t_n+K_n)=\varnothing$ for all $n\ge1$. Set $F_n=t_n+K_n$ and it is easy to see that $\{F_n\}_{1}^\infty$ is also a weak F{\o}lner sequence in $(R,+)$. But
\begin{gather*}
\int_Xg_1^t1_A\dotsm g_l^t1_A\,d\mu\le\mu(A)^l-\varepsilon\quad \forall t\in F_n, \ n\ge1.
\end{gather*}
This is a contradiction to (\ref{eq4.3}) and thus the proof of Theorem~\ref{Kh} is completed.

It should be noted that if (\ref{eq4.3}) holds only over the F{\o}lner sequence $\{F_n\}_{1}^\infty$ satisfying the Tempelman or Shulman conditions, then we cannot deduce the syndetic property of $N_{g_1,\dotsc,g_l}(A,\varepsilon)$ in $(R,+)$ since $F_n=t_n+K_n$ in the above is not necessarily a F{\o}lner sequence satisfying these conditions.
\section*{\textbf{Acknowledgments}}%
This work was partly supported by National Natural Science Foundation of China grant $\#$11271183 and PAPD of Jiangsu Higher Education Institutions.



\end{document}